\newtheorem{theorem}{Theorem}[section]
\newtheorem{proposition}[theorem]{Proposition}
\newtheorem{lemma}[theorem]{Lemma}
\newtheorem{corollary}[theorem]{Corollary}
\theoremstyle{definition} 
\newtheorem{definition}[theorem]{Definition}
\newtheorem{remark}[theorem]{Remark}
\newtheorem{example}[theorem]{Example}
\newtheorem{question}[theorem]{Question}
\renewenvironment{proof}{{\noindent\bfseries Proof.}}{\qed} 
\theoremstyle{theorem}
\newenvironment{customthm}[1]{\innercustomthm}{\endinnercustomthm}
\newenvironment{customcor}[1]  {\innercustomcor}{\endinnercustomcor}
\newcommand{\tr}{\operatorname{tr}}
\newcommand{\diag}{\operatorname{diag}}
\newcommand{\spn}{\operatorname{span}}
\newcommand{\rank}{\operatorname{rank}}
\newcommand{\R}{\mathbb{R}}
\newcommand{\bgamma}{\boldsymbol{\gamma}}
\newcommand{\Gl}{\mathrm{GL}}
\begin{document}

\title{The geometry of magnitude for finite metric spaces}

\author{Karel Devriendt} 

\date{}

\maketitle
\begin{abstract}
The main result of this article is a geometric interpretation of magnitude, a real-valued invariant of metric spaces. We introduce a Euclidean embedding of a (suitable) finite metric space $X$ such that the magnitude of $X$ can be expressed in terms of the `circumradius' of its embedding $S$. The circumradius is the radius of the unique sphere that goes through $S$. We give three applications: First, we describe the asymptotic behaviour of the magnitude of $tX$ as $t\rightarrow \infty$, in terms of the circumradius. Second, we develop a matrix theory for magnitude that leads to explicit relations between the magnitude of $X$ and the magnitude of its subspaces. Third, we identify a new regime in the limiting behaviour of $tX$, and use this to show submodularity-type results for magnitude as a function on subspaces. 
\end{abstract}

\section{Introduction}\label{sec: introduction}

\subsection{Background}
Magnitude is a real-valued invariant for enriched categories (in general) and metric spaces (in particular) that was introduced by Leinster in 2006, in \cite{leinster_2008_euler}. Before giving a definition, we mention two slogans that are helpful in guiding one's intuition on magnitude. The first slogan says that 
$$\text{\emph{``Magnitude is like an Euler characteristic."}}$$
This statement reflects the history of magnitude \cite{leinster_2008_euler}, motivates its definition, and guides the development of its theory, for instance in the generalization to magnitude homology \cite{hepworth_2017_categorifying, leinster_2021_homology}. In this paper, the connection to Euler characteristics shows itself in our study of valuative properties, i.e., how magnitude behaves as a function on subspaces $Y\subseteq X$ and what kind of inclusion--exclusion-type relations it satisfies. A second slogan says that
$$\text{\emph{``Magnitude counts the effective number of points."}}$$
Perhaps more appropriate for finite spaces, this intuition explains the success of magnitude in applications such as quantifying (bio-)diversity \cite{leinster_2016_diversity} and in data analysis \cite{adamer_2024_vector, hunstman_2023_diversity}. The second slogan comes forward in this paper in our study of the asymptotic convergence of magnitude to the point count $\# X$; see also Examples \ref{ex: 2-point} and \ref{ex: 3-point}. At the time of writing, an online bibliography \cite{leinster_2025_bibliography} maintained by Leinster \& Meckes counts more than 120 papers on magnitude. 

\subsection{Summary of results}
In this article, we focus on magnitude for \emph{finite} metric spaces. This setting was treated early on in the development of magnitude, for instance by Leinster in \cite[\S2] {leinster_magnitude_2013} and by Meckes in \cite{meckes_2012_positive}. In particular, both works found that so-called positive definite metric spaces are particularly well-behaved (see Definition \ref{def: positive definite}). Our work builds forth on this analysis by drawing a strong connection between magnitude of finite positive definite metric spaces and Euclidean geometry, via matrix theory. We briefly describe our main result and its applications below.
\\~\\
The magnitude of a finite metric space $(X,d)$ is defined based on the  \emph{similarity matrix} $Z$ with entries $z_{ij}=e^{-d(i,j)}$. When $Z$ is positive definite, we call $X$ \emph{positive definite} and define:
$$
\text{The magnitude of $X=(X,d)$ is~}\vert X\vert := \sum_{i,j\in X} \big(Z^{-1}\big)_{ij}.
$$
We will come to a more generally applicable definition later. The main new idea in this article is that there is a Euclidean embedding $\varphi: X\rightarrow\R^{X-1}$, which we call the \emph{similarity embedding}, such that the magnitude of $X$ can be expressed in terms of the circumradius $R(S)$ of its embedding $S=\varphi(X)$; the circumradius is the radius of the unique sphere in $\R^{X-1}$ that goes through the points $S$. More precisely, we find: 
\begin{customthm}{\ref{thm: magnitude circumradius}}
Let $X$ be a positive definite metric space with similarity embedding $S$. Then
\begin{equation}\tag{\ref{eq: main result}}
\vert X\vert \,=\, \frac{1}{1-2R(S)^2}.
\end{equation}
\end{customthm}
In fact, requiring expression \eqref{eq: main result} for all subspaces $Y\subseteq X$ characterizes the embedding $S$ (see Theorem \ref{thm: magnitude circumradius Y}). So, in a sense, the embedding $\varphi$ naturally appears in the context of magnitude.
\begin{example}\label{ex: 2-point}
Let us illustrate Theorem \ref{thm: magnitude circumradius} by a classical example in magnitude: the metric space $X^{(2)}$ consisting of two points at distance $d>0$. By inverting the positive definite similarity matrix $Z=\Big(\begin{smallmatrix}1&e^{-d}\\ e^{-d}&1\end{smallmatrix}\Big)$ and summing the entries of $Z^{-1}$, one directly computes the magnitude
$$
\vert X^{(2)}\vert = 1 + \tanh(d/2).
$$
The embedding of $X^{(2)}$ that we introduce gives two points $S\subseteq \R$ at distance $\sqrt{1-e^{-d}}$. Since the circumradius of $S$ is half this distance, Theorem \ref{thm: magnitude circumradius} gives the relation
$$
\frac{1}{1-2R(S)^2} \,=\, \frac{1}{1-\frac{1-\exp(-d)}{2}} \,=\, \frac{2}{1+\exp(-d)}\,=\, 1+\tanh(d/2) \,=\, \vert X^{(2)}\vert.
$$
\end{example}

What lies behind Theorem \ref{thm: magnitude circumradius}? To lift some of the mystery, let us reveal the underlying linear algebra and construct the embedding $\varphi$ of $X$ explicitly, starting from its matrix $Z$:
\begin{enumerate}[align=left,labelwidth=*,labelindent=2em,leftmargin=!]
\item[Step 1:] Construct the centered matrix $K= \frac{1}{2}(I-\tfrac{\mathbf{11}^T}{n})Z(I-\tfrac{\mathbf{11}^T}{n})$, where $n=\# X$.
\item[Step 2:] Because $Z$ is positive definite, $K$ admits a square root $\sqrt{K}$.
\item[Step 3:] Define the embedding $\varphi(i)$ of $i\in X$ as the $i$th column of $\sqrt{K}$.
\end{enumerate}
In other words, $K$ is the Gram matrix of $S$ when the origin is placed at the centroid of $S$. Based on this embedding, Theorem \ref{thm: magnitude circumradius} then follows from the observation that the circumradius of $S$ is governed by very similar ``equilibrium" equations to those of magnitude; see Remark \ref{rem: equilibrium equations}. 

We continue with a brief description of the three main applications of Theorem \ref{thm: magnitude circumradius}.
\\~\\
\textbf{As a first application}, we consider the asymptotic behaviour of magnitude. Much of the theory of magnitude is concerned with studying the magnitude $\vert t X\vert$ across different scales $t>0$, based on the metric transformation $d\mapsto t\cdot d$. Leinster \& Willerton showed in \cite[Thm. 3]{leinster_2013_asymptotic} that magnitude asymptotically counts the number of points $n$ in the metric space $X$, as follows 
$$
\vert tX\vert = n - q(tX) \text{~with~}q(tX)\rightarrow 0\text{~as~}t\rightarrow \infty.
$$
We give a description of the asymptotics of the error term $q$ in this formula; let $S_t:=\varphi(tX)$.
\begin{customthm}{\ref{thm: asymptotics of q}}
Let $X$ be a metric space on $n$ points. Then we have the asymptotic equivalence
$$n-\vert tX\vert=q(tX) \,\sim\, n^2\left( \frac{n-1}{n} - 2R(S_t)^2\right).$$
\end{customthm}

Geometrically, Theorem \ref{thm: asymptotics of q} states that the magnitude of $tX$ tends to the magnitude of the discrete metric space in the same way as the circumradius of $S_t$ tends to the circumradius of the regular simplex with unit-length edges.  Let us return to the example of the two-point metric space to illustrate the theorem.
\begin{example}
For the two-point metric space $X^{(2)}$, Theorem \ref{thm: asymptotics of q} correctly gives
$$
q(tX^{(2)}) = 2 - \vert t X^{(2)}\vert = \frac{2}{1+\exp(t\cdot d)} \,\,\sim\,\, 4(\tfrac{1}{2}-\tfrac{1}{2}\exp(-t\cdot d)). 
$$
In other words, we find that $\vert tX^{(2)}\vert\approx 2 e^{-t d}$ asymptotically. This is illustrated in Figure \ref{fig: 2point_metric_space}.
\begin{figure}[h!]
\centering
\includegraphics[width=0.42\textwidth]{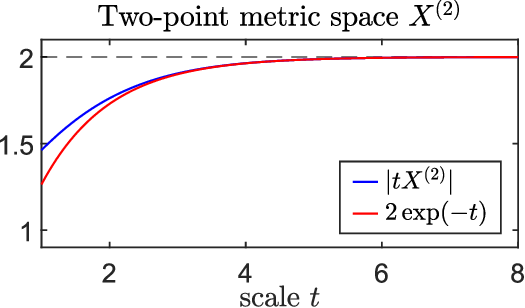}
\caption{The magnitude $\vert tX^{(2)}\vert$ of the metric space with two points at distance $d=1$ is well-approximated by $2e^{-t}$ at large scales $t\gg 0$. This is explained by Theorem \ref{thm: asymptotics of q}.}
\label{fig: 2point_metric_space}
\end{figure}
\end{example}

\textbf{As a second application}, we take inspiration from Fiedler's matrix theory for Euclidean simplices in \cite{fiedler_matrices_2011}, and develop a matrix theory for magnitude. Generalizing the relation between the pseudoinverse Gram matrix and distance matrix of a simplex (see, e.g., \cite[\S3.2]{devriendt_thesis_2022}), we find:
\begin{customthm}{\ref{thm: fiedler-bapat}}
Let $X$ be a metric space with invertible $Z$ and nonzero magnitude. Then
\begin{equation}\tag{\ref{eq: fiedler-bapat}}
\begin{pmatrix}0 & \mathbf{1}^T\\\mathbf{1}& Z\end{pmatrix}^{-1}
\,=\,
\begin{pmatrix}-\vert X\vert^{-1} &\mathbf{w}^T/\vert X\vert \\\mathbf{w}/\vert X\vert& \frac{1}{2}K^\dagger\end{pmatrix}.
\end{equation}
\end{customthm}
In equation \eqref{eq: fiedler-bapat}, the vector $\mathbf{1}=(1,\dots,1)^T$ is the all-ones vector, $\mathbf{w}:=(Z^{-1})\mathbf{1}$ is the \emph{weighting} on the metric space $X$ and $K^\dagger$ is the Moore--Penrose pseudoinverse of the Gram matrix $K$ of the centered point set $S$; more results on the matrices $Z,K$ and $K^\dagger$ are given in Section \ref{sec: matrix theory}. 
\begin{example} For the two-point metric space with distance $d=-\ln(\delta)$, Theorem \ref{thm: fiedler-bapat} reads
$$
\begin{pmatrix}
0&1&1\\
1&1&\delta\\
1&\delta&1
\end{pmatrix}^{-1}\,=\,\,\,
\frac{1}{2}\begin{pmatrix}
-(1+\delta) & 1 & 1\\
1 & (1-\delta)^{-1} & -(1-\delta)^{-1}\\
1 & -(1-\delta)^{-1} & (1-\delta)^{-1}\\
\end{pmatrix}.
$$
\end{example}

What is the use of this identity? In this article, it mainly serves as a tool to study the magnitude of subspaces of $X$. Since any subspace $Y\subseteq X$ of a positive definite metric space is also positive definite, Theorem \ref{thm: fiedler-bapat} applies to $Y$ as well. The subspace relation between $Y$ and $X$ translates nicely to an algebraic relation between their respective matrices, and we arrive at:
\begin{customcor}{\ref{cor: magnitude Y}}
Let $X$ be a positive definite metric space. Then for any $x\in X$ we have
$$
\vert X \backslash\{x\} \vert \,=\, \vert X\vert\left(1 + \frac{2w_x^2}{\bar{c}_x \vert X\vert}\right)^{-1} \quad \textup{~and~}\quad \frac{w'_i}{\vert X\backslash\{x\}\vert} \,=\, \frac{w_i}{\vert X\vert} + \frac{c_{ix}}{\bar{c}_{x}} \frac{w_x}{\vert X\vert},
$$
for any $i\neq x$, and where $\mathbf{w}'$ is the weighting on $X\backslash\{x\}$.
\end{customcor}
Here, we use the notation $c_{ij}=-(K^\dagger)_{ij}$ and $\bar{c}_i=\sum_j c_{ij}$. This is a corollary of Theorem \ref{thm: magnitude Y general}, which is an analogous statement for general subspaces. We illustrate the relation between $\vert X\vert$ and $\vert X\backslash\{x\}\vert$ in Corollary \ref{cor: magnitude Y} by looking at the quantity $\frac{2w_x^2}{\bar{c}_x\vert X\vert}$ that controls this relation. 
\begin{example}
\label{ex: 3-point}
We consider the metric space on $X=\{1,2,3\}$ with distances
$$
d(1,2)=2\quad\text{~and~}\quad d(1,3)=100 \quad\text{~and~}\quad d(2,3)=100.
$$
The points $1$ and $2$ are close together and equidistant from point $3$. Since every three-point metric space is positive definite \cite[Prop. 2.4.15]{leinster_magnitude_2013}, its magnitude exists for all $t>0$. The black lines in Figure \ref{fig: 3point_metric_space} plot the magnitude of $X$: for small $t$ we effectively see one point, for medium $t$ we see two points, i.e., $\{1,2\}$ and $3$, and for large $t$ we see three points. 
\begin{figure}[h!]
\centering
\includegraphics[width=0.99\textwidth]{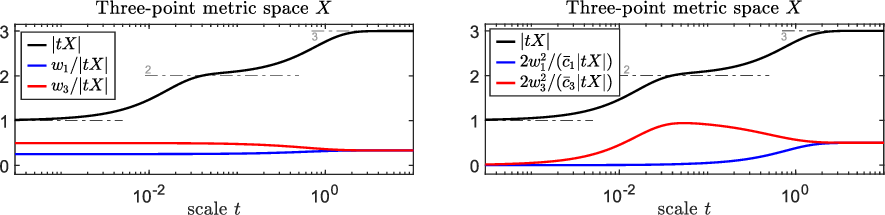}
\caption{Both panels: The magnitude $\vert tX\vert$ (black line) of the three-point metric space in Example \ref{ex: 3-point}; note the logarithmic scale for the horizontal axis. Left panel: The relative contribution $w_x/\vert tX\vert$ for points 1 and 2 (blue line) and 3 (red line). Right panel: The change contribution $2w^2_x/(\bar{c}_x\vert tX\vert)$ for points 1 and 2 (blue line) and 3 (red line).}
\label{fig: 3point_metric_space}
\end{figure}

The left and right panel in Figure \ref{fig: 3point_metric_space} compare two possible ways to measure the contribution of a point $x$ to the magnitude: $w_x/\vert X\vert$ on the left, and $\tfrac{2w^2_x}{\bar{c}_x\vert X\vert}$ on the right, inspired by Corollary \ref{cor: magnitude Y}. Overall, the latter appears to be more sensitive to the changing geometry across scales.
\end{example}
~\\
\textbf{As a third application}, we initiate the study of a new class of metric spaces; we call these spaces \emph{strongly positive definite} and they are defined by the additional positivity conditions $\mathbf{w}>0$ and $c_{ij}=-(K^\dagger)_{ij}>0$, for all $i\neq j$. We show that this class is closed under taking subspaces, and that every metric space $tX$ becomes strongly positive definite when $t\gg 0$. 

As the main application, we prove two valuative results on the magnitude of a strongly positive definite metric space $X$. These results are an attempt to reproduce results for magnitude in the spirit of the inclusion--exclusion property of the Euler characteristic: for (suitable) subspaces $A,B\subseteq \mathcal{X}$ of a topological space $\mathcal{X}$, the Euler characteristic $\chi$ satisfies
$$
\chi(A)+\chi(B) \,=\, \chi(A\cup B)+\chi(A\cap B).
$$
This fact has inspired research into valuative properties of magnitude: Leinster showed an inclusion--exclusion result for magnitude of graphs in \cite{leinster_graph_2019}, Leinster and Willerton showed in \cite{leinster_2013_asymptotic} that the magnitude of subsets of $\R^n$ satisfies inclusion--exclusion asymptotically, and Gimperlein, Goffeng and Louca further improved on these geometric results in \cite{gimperlein_magnitude_2024}. Here, we consider a variant of inclusion--exclusion; a function $f:2^X\rightarrow \R$ is called \emph{strictly submodular} if\footnote{Later in this article we will work with a different, but equivalent definition \eqref{def: submodularity}.}
$$
f(A) + f(B) \,>\, f(A\cup B) + f(A\cap B),
$$
for all $A,B\subseteq X$, and $f$ is called increasing (resp. decreasing) if it is increasing (resp. decreasing) with respect to the subset partial order. We show the following two results:
\begin{customthm}{\ref{thm: inverse submodularity}}
Let $X$ be strongly positive definite. Then the function
$$
f: Y\longmapsto 
\begin{cases}
-\vert Y\vert^{-1} \textup{,~if $Y\neq \emptyset$} \\ 
\,\,\,\,\,\,\alpha \,\,\,\quad\textup{,~if $Y=\emptyset$}
\end{cases}
$$
is increasing if $\alpha<-1$ and strictly submodular if $\alpha< -\tfrac{3}{2}$. In particular, $f$ is strictly submodular on the subspaces of $tX'$, for any $X'$ and $t\gg 0$ and $\alpha<-\tfrac{3}{2}$.
\end{customthm}
\begin{customthm}{\ref{thm: shifted submodularity}}
Let $X$ be any metric space and $t\gg 0$. Then the function
$$
f:Y \longmapsto \begin{cases}
\frac{m-\vert tY\vert}{m^2} + \frac{m-1}{m} \textup{~~~if $m:=\#Y \neq 0$}\\
\quad\quad\quad \alpha \quad\quad\quad\textup{~if $Y=\emptyset$}
\end{cases}
$$
is increasing if $\alpha<\tfrac{1}{2}$ and strictly submodular if $\alpha<-\tfrac{1}{2}$.
\end{customthm}
~\\
\textbf{Organization:} Section \ref{sec: magnitude circumradius} starts with the definition of magnitude, positive definite metric spaces, and the similarity embedding $\varphi$; this leads to the proof of Theorem \ref{thm: magnitude circumradius}, which relates the magnitude $\vert X\vert$ and circumradius $R(S)$. Section \ref{sec: asymptotics} is rather short and deals with the asymptotics of $q(tX)$, in Theorem \ref{thm: asymptotics of q}. In Section \ref{sec: matrix theory}, we develop the matrix theory of the matrices $Z,K$ and $K^\dagger$: we study their relations, their connections to magnitude and weightings and prove Theorem \ref{thm: fiedler-bapat}. This is then used to derive Theorem \ref{thm: magnitude Y general} and its corollaries, which describe the magnitude and weighting of subspaces $Y\subseteq X$ in terms of the data of $X$. Finally, Section \ref{sec: strong positive definite} deals with the new class of strongly positive definite metric spaces. We define this class, study some of its properties and show that it describes the asymptotics of $tX$ for general $X$. These result are then used to prove the submodularity results, Theorem \ref{thm: inverse submodularity} and \ref{thm: shifted submodularity}.

\section{Magnitude and circumradius}\label{sec: magnitude circumradius}

\subsection{Definitions: magnitude, weighting \& positive definite}
We start with the definition of magnitude and positive definite metric spaces; recall that the similarity matrix $Z=Z(X)$ of a finite metric space $(X,d)$ has entries defined as $z_{ij}=e^{-d(i,j)}$.
\begin{definition}[Weighting, magnitude]
Let $X$ be a finite metric space. 
\begin{itemize}
\item A vector $\mathbf{w}\in\R^X$ that satisfies $Z\mathbf{w}=\mathbf{1}$ is called a \emph{weighting on $X$}.
\item The \emph{magnitude} of $X$ is the sum $\vert X\vert =\mathbf{1}^T\mathbf{w}$, for any weighting $\mathbf{w}$ on $X$.
\end{itemize}
If the system has no solutions, we say that the magnitude and weighting do not exist. By definition, we set the magnitude of the empty metric space $X=\emptyset$ to zero. We use $n$ or $\#X$ to denote the cardinality of $X$, and we assume that $0<\#X<\infty$ unless explicitly stated otherwise. 
\end{definition}
\begin{definition}\label{def: positive definite}
A metric space $X$ is \emph{positive definite} if $Z(X)$ is positive definite.
\end{definition}

The magnitude of a positive definite metric space always exists. In fact, since the similarity matrix $Z$ is invertible in this case, there is a unique weighting $\mathbf{w}=Z^{-1}\mathbf{1}$ on $X$, and the magnitude of a positive definite metric space $X$ has the explicit form:
$$
\vert X\vert \,=\, \sum_{i,j\in X}(Z^{-1})_{ij} \,=\, \mathbf{1}^TZ^{-1}\mathbf{1}.
$$
Positive definite metric spaces play an important role in the theory of magnitude. Not only is much of the theory of magnitude easier to develop for positive definite metric spaces, as is the case in this article, but the following result due to Leinster states that any metric space eventually becomes positive definite when scaling its metric $d\mapsto t\cdot d$; we write $tX = (X,t\cdot d)$.
\begin{proposition}[{\cite[Prop. 2.4.6]{leinster_magnitude_2013}}]\label{prop: tX positive definite}
For any metric space $X$ and $t\gg 0$, $tX$ is positive definite.
\end{proposition}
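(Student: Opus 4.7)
The plan is to use the fact that positive definiteness of a symmetric matrix is an open condition in the entries, together with the observation that the similarity matrix $Z(tX)$ tends to the identity matrix as $t \to \infty$.

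First I would examine the entries of $Z(tX)$: the diagonal entries are $e^{-t\cdot d(i,i)} = e^{0} = 1$ for all $i$, and the off-diagonal entries are $e^{-t\cdot d(i,j)}$ for $i\neq j$. Since $(X,d)$ is a metric space, $d(i,j)>0$ whenever $i\neq j$, so each off-diagonal entry satisfies $e^{-t\cdot d(i,j)} \to 0$ as $t\to\infty$. Hence $Z(tX) \to I$ entrywise (and thus in any matrix norm, since $X$ is finite).

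Next I would invoke continuity of eigenvalues: for a symmetric matrix, the smallest eigenvalue $\lambda_{\min}$ depends continuously on the entries. Since $\lambda_{\min}(I) = 1 > 0$, there exists $T$ such that $\lambda_{\min}(Z(tX)) > 0$ for all $t \geq T$, which is precisely positive definiteness. A cleaner alternative would be to apply the Gershgorin circle theorem: every eigenvalue of $Z(tX)$ lies in some disk centered at a diagonal entry $1$ with radius $\sum_{j\neq i} e^{-t\cdot d(i,j)}$. Choosing $t$ so large that this row sum is strictly less than $1$ for every $i$ (which is possible because there are finitely many rows and each off-diagonal entry goes to $0$), we conclude that every eigenvalue is strictly positive, so $Z(tX)$ is positive definite.

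I do not expect any real obstacle in this proof; the essential content is the continuity of eigenvalues (or the Gershgorin bound), and the only thing that makes the argument work is the finiteness of $X$, which ensures the entrywise limit is uniform. The one small subtlety worth recording is that the threshold $T$ depends on $X$ through $\min_{i\neq j} d(i,j)$ and $\#X$, since we need $(\#X-1)\,e^{-t\cdot d_{\min}} < 1$; this gives an explicit bound $t > \ln(\#X - 1)/d_{\min}$ after which $tX$ is guaranteed to be positive definite.
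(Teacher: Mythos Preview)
Your argument is correct. The paper does not prove this proposition itself but cites it from Leinster; the approach you describe---$Z(tX)\to I$ combined with openness of positive definiteness (via eigenvalue continuity or Gershgorin)---is exactly the standard proof and matches the continuity strategy the paper later reuses in Proposition~\ref{prop: strong positive definite asymptotic}.
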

The following closure result will be useful later on.
\begin{proposition}\label{prop: Y positive definite}
If $X$ is positive definite then any subspace $Y\subseteq X$ is positive definite.
\end{proposition}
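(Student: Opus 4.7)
The plan is to reduce the statement to the standard linear algebra fact that any principal submatrix of a positive definite matrix is itself positive definite. First I would observe that the similarity matrix $Z(Y)$ of the subspace $Y \subseteq X$ is precisely the principal submatrix of $Z(X)$ obtained by keeping only the rows and columns indexed by $Y$; this is immediate from the definition $z_{ij} = e^{-d(i,j)}$ together with the fact that $Y$ carries the restricted metric.

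Next I would verify the linear algebra fact in-line, since it is short: given any nonzero $\mathbf{v} \in \R^Y$, extend it to a vector $\tilde{\mathbf{v}} \in \R^X$ by setting all entries outside $Y$ equal to zero. Then $\tilde{\mathbf{v}}$ is nonzero, and by construction
$$
\mathbf{v}^T Z(Y)\mathbf{v} \,=\, \tilde{\mathbf{v}}^T Z(X)\tilde{\mathbf{v}} \,>\, 0,
$$
where the inequality is the positive definiteness of $Z(X)$. Therefore $Z(Y)$ is positive definite, which by Definition \ref{def: positive definite} means $Y$ is positive definite.

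I do not expect any real obstacle here — the result is essentially a tautology once one notes the principal submatrix structure of $Z(Y)$ inside $Z(X)$. The only minor subtlety is making sure the indexing is unambiguous, i.e., that restricting the metric and restricting the similarity matrix give the same object; this is clear from the entrywise definition of $Z$.
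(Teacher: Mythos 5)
Your proof is correct and matches the paper's approach exactly: the paper's one-line proof invokes the same fact that a principal submatrix of a positive definite matrix is positive definite, while you simply spell out the standard zero-extension argument for that fact.
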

\begin{proof}
Any principal submatrix $Z(Y)$ of a positive definite matrix $Z(X)$ is positive definite.
\end{proof}

To conclude, we give an important class of examples of positive definite metric spaces.
\begin{example}[Negative type metric]
A metric space $(X,d)$ has negative type if $(X,\sqrt{d})$ admits an isometric Euclidean embedding. As shown by Meckes in \cite{meckes_2012_positive}, a metric space $X$ has negative type if and only if $tX$ is positive definite for all $t>0$; this is also called stably positive definite. There are many natural examples of negative type metric spaces, for instance point sets in Euclidean, spherical or hyperbolic space. See \cite[Thm. 3.6]{meckes_2012_positive} for a list of examples.
\end{example}

\subsection{Similarity embedding of positive definite metric spaces}
In this section, we introduce an embedding of a positive definite metric space and show its relation to magnitude. This embedding makes critical use of positive definiteness. 
\begin{definition}[Similarity embedding]
A \emph{similarity embedding} of a positive definite metric space $X$ is an embedding $\varphi:X\rightarrow \R^{X-1}$ that satisfies
$$
\Vert \varphi(i)-\varphi(j)\Vert^2 \,\,=\,\, 1-e^{-d(i,j)}\,\,=\,\, 1-z_{ij}\quad\text{~for all~} i,j\in X.
$$
\end{definition}
Recall the construction from the introduction, which proves existence of these embeddings
\begin{proposition}\label{prop: similarity embedding}
The similarity embedding exists and is unique up to rigid transformations.
\end{proposition}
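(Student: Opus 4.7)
The plan is to verify the three-step construction sketched in the introduction, check the squared-distance identity directly, and then invoke a classical rigidity argument for uniqueness. For existence, I would first form the doubly centered matrix $K = \tfrac{1}{2} J Z J$ with $J = I - \tfrac{1}{n}\mathbf{1}\mathbf{1}^T$. The identity $\mathbf{v}^T K \mathbf{v} = \tfrac{1}{2}(J\mathbf{v})^T Z (J\mathbf{v})$ combined with positive definiteness of $Z$ shows that $K$ is positive semidefinite with $\ker K = \spn(\mathbf{1})$; in particular $\rank(K) = n-1$. The eigendecomposition of $K$ then yields a factorization $K = V^T V$ with $V \in \R^{(n-1) \times n}$, and I would define $\varphi(i)$ to be the $i$th column of $V$, a point in $\R^{n-1}$. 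To check the distance identity, I would expand the $(i,j)$-entry of $JZJ$ directly; the off-diagonal row and column means cancel in the combination $K_{ii} + K_{jj} - 2 K_{ij}$, leaving $\tfrac{1}{2}(z_{ii} + z_{jj} - 2 z_{ij}) = 1 - z_{ij}$ (using $z_{ii} = 1$). Since $K$ is the Gram matrix of the $\varphi(i)$, this is exactly $\Vert \varphi(i) - \varphi(j)\Vert^2 = 1 - z_{ij}$.

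For uniqueness, let $\varphi$ and $\varphi'$ be two similarity embeddings. Their squared distance matrices agree, so the classical reverse identity expressing a centered Gram matrix in terms of squared distances shows that, after translating each image so that its centroid lies at the origin, both embeddings realize the same Gram matrix $K$. Two point configurations in $\R^{n-1}$ with equal Gram matrices differ by an orthogonal transformation of the ambient space, so $\varphi$ and $\varphi'$ differ by the composition of an orthogonal transformation with translations, i.e., by a rigid transformation.

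The delicate step is the rank computation $\rank(K) = n-1$: it uses the full strength of positive definiteness of $Z$ (and would fail for merely positive semidefinite $Z$) and is precisely what ensures that the image of $\varphi$ has affine hull of full dimension $n-1$, so the embedding genuinely lies in $\R^{X-1}$ rather than in a proper subspace. Without this step, the target dimension in the definition would need to be chosen more carefully, and the uniqueness claim would have to be phrased as uniqueness up to the Euclidean group of this smaller ambient space.
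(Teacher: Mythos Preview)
Your proof is correct and follows essentially the same route as the paper: form the centered matrix $K=\tfrac{1}{2}JZJ$, use positive definiteness of $Z$ to see that $K$ is positive semidefinite with kernel $\spn(\mathbf{1})$, factor $K=V^TV$ with $V\in\R^{(n-1)\times n}$, and read off the squared distances from $K_{ii}+K_{jj}-2K_{ij}$. Your uniqueness argument via centered Gram matrices is slightly more explicit than the paper's one-line appeal to the classical fact that pairwise distances determine a point configuration up to rigid motion, but the content is the same.
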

\begin{proof}
Let $X$ be a positive definite metric space and construct the centered similarity matrix 
$$
K \,:=\, \tfrac{1}{2}\big(I-\tfrac{\mathbf{11}^T}{n}\big)Z\big(I-\tfrac{\mathbf{11}^T}{n}\big),
$$
where $n=\# X$ is the cardinality of the metric space and $I$ is the identity matrix. Since $Z$ is positive definite, the matrix $K$ is positive semidefinite with $\ker(K)=\spn(\mathbf{1})$ (see also Proposition \ref{prop: interlacing}) and thus there exists an $(n-1)\times n$ square root $\sqrt{K}$, i.e., such that $\sqrt{K}^T\sqrt{K}=K$. The embedding $\varphi:X\ni i\mapsto(i$th column of $\sqrt{K})\in\R^{n-1}$ is a similarity embedding, since 
$$
\Vert\varphi(i)-\varphi(j)\Vert^2 \,=\, \frac{1}{2}(\mathbf{e}_i-\mathbf{e}_j)^T\sqrt{K}^T\sqrt{K}(\mathbf{e}_i-\mathbf{e}_j) \,=\, \tfrac{1}{2}(z_{ii}+z_{jj}-2z_{ij}) \,=\, 1-e^{-d(i,j)},
$$
where $\mathbf{e}_i$ is the $i$th basis vector. Since the embedding $\varphi$ is defined via all pairwise distances between the points $\varphi(X)$, it is unique up to rigid transformations, i.e., isometries of $\R^{n-1}$.
\end{proof}
\begin{remark}[Computation]
The similarity embedding $\varphi$ can be computed efficiently using standard linear algebra methods. For instance, one can compute $\sqrt{K}$ via the Cholesky decomposition or an eigenvalue decomposition, for which highly optimized algorithms exist.
\end{remark}

Proposition \ref{prop: similarity embedding} associates to each positive definite metric space $X$ a set of points $S:=\varphi(X)$ in $\R^{X-1}$. As introduced, we will show that a certain geometric invariant of this point set is related to magnitude. We first establish another important fact about the geometry of $S$.
\begin{proposition}\label{prop: S simplex}
Let $X$ be positive definite. Then $S=\varphi(X)$ are the vertices of a simplex.
\end{proposition}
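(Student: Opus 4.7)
The plan is to show that the $n$ points of $S=\varphi(X)$ in $\R^{n-1}$ are affinely independent, which is equivalent to saying they form the vertices of an $(n-1)$-simplex. Concretely, I would verify that if $\boldsymbol{\lambda}\in\R^X$ satisfies $\sum_i \lambda_i \varphi(i) = 0$ and $\mathbf{1}^T\boldsymbol{\lambda}=0$, then $\boldsymbol{\lambda}=0$.

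The key observation is that, by construction, $\sum_i \lambda_i \varphi(i) = \sqrt{K}\,\boldsymbol{\lambda}$, so the condition $\sum_i \lambda_i \varphi(i)=0$ is equivalent to $\sqrt{K}\boldsymbol{\lambda}=0$, and hence to $\boldsymbol{\lambda}^T K \boldsymbol{\lambda}=0$ (since $K=\sqrt{K}^T\sqrt{K}$). Next, I would use the assumption $\mathbf{1}^T\boldsymbol{\lambda}=0$ to note that the centering projector $(I-\tfrac{\mathbf{11}^T}{n})$ acts as the identity on $\boldsymbol{\lambda}$. Therefore
$$
\boldsymbol{\lambda}^T K \boldsymbol{\lambda} \,=\, \tfrac{1}{2}\boldsymbol{\lambda}^T\bigl(I-\tfrac{\mathbf{11}^T}{n}\bigr)Z\bigl(I-\tfrac{\mathbf{11}^T}{n}\bigr)\boldsymbol{\lambda} \,=\, \tfrac{1}{2}\boldsymbol{\lambda}^T Z \boldsymbol{\lambda}.
$$
Since $Z$ is positive definite, $\boldsymbol{\lambda}^T Z \boldsymbol{\lambda}=0$ forces $\boldsymbol{\lambda}=0$, which establishes affine independence.

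There is essentially no real obstacle: this is a short linear-algebra argument that extracts affine independence of $S$ directly from positive definiteness of $Z$. The only thing to be careful about is to phrase the condition in terms of the affine (as opposed to linear) dependence of the $\varphi(i)$, which is precisely what introduces the constraint $\mathbf{1}^T\boldsymbol{\lambda}=0$ and, pleasingly, makes the centering factors in $K$ vanish. I would also briefly remark that the centroid of $S$ lies at the origin, since $\sqrt{K}\,\mathbf{1}=0$ (because $\mathbf{1}^T K\mathbf{1}=\|\sqrt{K}\mathbf{1}\|^2=0$), which confirms that $S$ indeed spans an affine hyperplane of dimension $n-1$ in $\R^{n-1}$ and thus that the $n$ points form a full-dimensional simplex.
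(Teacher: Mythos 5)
Your proof is correct. You verify affine independence directly from its definition: a relation $\sqrt{K}\boldsymbol{\lambda}=0$ with $\mathbf{1}^T\boldsymbol{\lambda}=0$ yields $\boldsymbol{\lambda}^TK\boldsymbol{\lambda}=\tfrac12\boldsymbol{\lambda}^TZ\boldsymbol{\lambda}=0$, and positive definiteness of $Z$ forces $\boldsymbol{\lambda}=0$. The paper argues differently, via rank counting: positive definiteness of $Z$ gives $\rank(K)=n-1$ (referencing the interlacing proposition), hence $\sqrt{K}$ has full row rank $n-1$, from which affine independence of the columns is asserted. Your route has a modest advantage in precision: the paper's step ``$\sqrt{K}$ has rank $n-1$, hence the columns are affinely independent'' implicitly uses that the columns sum to zero ($\sqrt{K}\mathbf{1}=0$) to identify the linear span with the affine span, and this is not spelled out; your argument sidesteps the issue because the constraint $\mathbf{1}^T\boldsymbol{\lambda}=0$ is built into the affine-dependence test and conveniently neutralizes the centering projectors. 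In short: same conclusion, same ingredient (positive definiteness of $Z$), but a quadratic-form argument instead of a rank argument, and yours handles the affine/linear distinction more explicitly.
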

\begin{proof}
This is a consequence of positive definiteness: since $Z$ has full rank, the matrix $K$ has rank $(n-1)$ and its square root, the $(n-1)\times n$ matrix $\sqrt{K}$, has full rank $n-1$. It follows that the columns of $\sqrt{K}$ and thus the points $S$ are affinely independent.
\end{proof}

\subsection{Magnitude and circumradius}
We now come to the definition of circumradius. This is the geometric invariant of the point set $S$ that is closely related to magnitude; we note that, by construction, the matrix $(\mathbf{11}^T-Z)$ is the matrix of squared distances between pairs of points in $S$, and we may thus write $Z(S)$.
\begin{theorem}\label{thm: definition circumradius}
The following numbers are equal for the vertices $S$ of a simplex:
\begin{enumerate}
\item The radius $r$ of the unique sphere in $\R^{S-1}$ that goes through $S$;
\item The unique value $r$ for which $(\mathbf{11}^T-Z(S))\mathbf{x}=2r^2\cdot\mathbf{1}$ with $\mathbf{x}^T\mathbf{1}=1$ has a solution.
\end{enumerate}
This common value is called the \emph{circumradius} of $S$ and is denoted by $R(S)$.
\end{theorem}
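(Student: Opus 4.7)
The plan is to connect the two descriptions of $r$ through a single classical identity that expresses the squared distance $\|p-s_i\|^2$ from any affine combination $p=\sum_j x_j s_j$ (with $\mathbf{1}^T\mathbf{x}=1$) of the simplex vertices $s_1,\dots,s_n=:S$ in terms of the squared distance matrix $D:=\mathbf{11}^T-Z(S)$ alone. Placing the origin at $s_i$ and using $\sum_j x_j=1$, expanding $\|p-s_i\|^2=\|\sum_j x_j(s_j-s_i)\|^2$ and applying the polarization $\langle s_j-s_i,\,s_k-s_i\rangle=\tfrac{1}{2}(D_{ij}+D_{ik}-D_{jk})$ would yield
$$\|p-s_i\|^2 \,=\, (D\mathbf{x})_i-\tfrac{1}{2}\mathbf{x}^T D\mathbf{x}.$$
This identity is the workhorse; everything else is a short deduction from it.

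For $(1)\Rightarrow(2)$, I would write the circumcenter $c$ in the unique barycentric coordinates $c=\sum_i x_i s_i$ with $\sum_i x_i=1$, which exist because the vertices are affinely independent and therefore affinely span $\R^{S-1}$. Since $\|c-s_i\|^2=r^2$ for every $i$, the identity forces $(D\mathbf{x})_i = r^2+\tfrac{1}{2}\mathbf{x}^T D\mathbf{x}$ independently of $i$, so $D\mathbf{x}$ is a scalar multiple of $\mathbf{1}$. Contracting with $\mathbf{x}^T$ pins down the scalar: $\mathbf{x}^T D\mathbf{x} = r^2 + \tfrac{1}{2}\mathbf{x}^T D\mathbf{x}$, i.e., $\mathbf{x}^T D\mathbf{x} = 2r^2$, whence $D\mathbf{x}=2r^2\mathbf{1}$.

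For $(2)\Rightarrow(1)$, given any solution $(\mathbf{x},r)$ of the system, I would set $c:=\sum_i x_i s_i$ and contract $D\mathbf{x}=2r^2\mathbf{1}$ with $\mathbf{x}^T$ to obtain $\mathbf{x}^T D\mathbf{x}=2r^2$; the identity then specializes to $\|c-s_i\|^2 = 2r^2 - r^2 = r^2$ for all $i$, so $c$ is the center of a sphere of radius $|r|$ through $S$, which is necessarily the (unique) circumsphere. Uniqueness of $r\geq 0$ is inherited from the uniqueness of the circumsphere---equivalently from the invertibility of the bordered Cayley--Menger-type matrix $\left(\begin{smallmatrix}D & \mathbf{1}\\\mathbf{1}^T & 0\end{smallmatrix}\right)$ for a simplex, which guarantees that the combined linear system for $(\mathbf{x},r^2)$ has a unique solution.

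I do not foresee a serious obstacle here: the only nontrivial ingredient is the barycentric squared-distance identity, which is a standard computation. The minor bookkeeping points---fixing the sign convention on $r$ and justifying that $\R^{S-1}$ really is the affine span of the vertices---both reduce to the affine independence of $S$, which is precisely what it means for $S$ to be a simplex in its ambient space.
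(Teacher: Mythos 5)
Your argument is correct and is in substance the same as the paper's: both parametrize the circumcenter by its barycentric coordinate $\mathbf{x}$, turn the constancy of $\|c-s_i\|^2$ into a linear system, and contract with $\mathbf{x}^T\mathbf{1}=1$ to pin down the scalar on the right-hand side. The only cosmetic difference is bookkeeping: you expand $\|c-s_i\|^2$ directly from the squared-distance matrix $D=\mathbf{11}^T-Z(S)$ via polarization, whereas the paper writes $\|c-s_i\|^2=(\mathbf{x}-\mathbf{e}_i)^T K(\mathbf{x}-\mathbf{e}_i)$ and then uses $\mathbf{1}^T(\mathbf{x}-\mathbf{e}_i)=0$ to replace $K$ by $\tfrac{1}{2}Z$; these are two ways of carrying out the same quadratic-form computation.
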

\begin{proof}
From Euclidean geometry (or algebraic geometry), we know 
there is a unique sphere in $\R^{S-1}$ that goes through the vertices $S$ of a simplex. This is the \emph{circumsphere} of the simplex. 


(1.~$\Leftrightarrow$ 2.) The circumsphere has center $\mathbf{c}$ (= the \emph{circumcenter}), which satisfies the equations:
$$
\Vert \mathbf{c}-\mathbf{s}\Vert^2\,=r^2 \quad\text{~for each $\mathbf{s}\in S$,}
$$
where $r$ is the radius of the circumsphere. If we write $\mathbf{p}$ for the barycentric coordinate of the circumcenter, i.e., the unique vector such that $\mathbf{1}^T\mathbf{p}=1$ and $\sqrt{K}\mathbf{p}=\mathbf{c}$, and use the fact that for each $i\in X$, the vector $\mathbf{e}_i$ is the barycentric coordinate of the $i$th point of $S$, then we can rewrite
\begin{align*}
&(\mathbf{p}-\mathbf{e}_i)^TK(\mathbf{p}-\mathbf{e}_i) \,= r^2 \quad\text{~~~for each $i\in X$}
\\
\iff&\tfrac{1}{2}(\mathbf{p}-\mathbf{e}_i)^TZ(\mathbf{p}-\mathbf{e}_i)\, = r^2 \quad\,\text{~for each $i\in X$}\quad\text{(since $\mathbf{1}^T(\mathbf{p}-\mathbf{e}_i)=0$)}
\\
\iff&\mathbf{e}_i^TZ\mathbf{p} = -r^2 + \tfrac{1}{2}\mathbf{p}^TZ\mathbf{p}+\tfrac{1}{2} \quad\text{for each $i\in X$}
\\
\iff&Z\mathbf{p} = (\tfrac{1}{2} - r^2 + \tfrac{1}{2}\mathbf{p}^TZ\mathbf{p})\cdot\mathbf{1}.
\\
\iff&Z\mathbf{p} = (1-2r^2)\cdot\mathbf{1} \hphantom{putting some distance betwee}\text{~(since $\mathbf{1}^T\mathbf{p}=1$)}
\\
\iff&(\mathbf{11}^T-Z)\mathbf{p}=2r^2\cdot\mathbf{1}.
\end{align*}
Thus the radius $r$ of the circumsphere satisfies the required equation (2.). Uniqueness of $r$ in (2.) follows from uniqueness of the sphere that contains all vertices $S$ of a simplex, or it can be derived from invertibility of $Z$ as implied by positive definiteness.
\end{proof}
\begin{remark}[Scaled embeddings]
Any rescaling $\alpha\cdot\varphi$ for $\alpha>0$ of the similarity embedding still embeds the metric space $X$ as the vertices of a simplex, with circumradius $\alpha R(S)$. Different choices than $\alpha=1$ may be useful in different applications, but some of the nice theory may be lost, for instance, when choosing a scaling that depends on $X$. For instance, for $\alpha^2=\vert X\vert$ or $\alpha^2=\# X$, the subspace characterization in Theorem \ref{thm: magnitude circumradius Y} below no longer holds. In a sense, $X$-dependent scalings are not functorial in the same way as independent scalings are.
\end{remark}

We now arrive at the first main result from the introduction, and its proof.
\begin{theorem}\label{thm: magnitude circumradius}
Let $X$ be a positive definite metric space with similarity embedding $S$. Then
\begin{equation}\label{eq: main result}
\vert X\vert \,=\, \frac{1}{1-2R(S)^2}.
\end{equation}
\end{theorem}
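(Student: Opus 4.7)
The plan is to exploit the striking parallelism between the defining equation of the weighting, $Z\mathbf{w}=\mathbf{1}$, and the characterization of the circumradius established in Theorem~\ref{thm: definition circumradius}. Both are linear systems with a right-hand side proportional to $\mathbf{1}$, and the circumradius simply encodes the scalar of proportionality.

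Concretely, I would start from characterization (2.) of Theorem~\ref{thm: definition circumradius}, which, after moving $\mathbf{11}^T\mathbf{p}=\mathbf{1}$ to the right-hand side (using $\mathbf{1}^T\mathbf{p}=1$), gives a barycentric coordinate vector $\mathbf{p}\in\R^X$ of the circumcenter satisfying
\begin{equation*}
Z\mathbf{p} \,=\, (1-2R(S)^2)\cdot\mathbf{1}, \qquad \mathbf{1}^T\mathbf{p}=1.
\end{equation*}
The next step is to note that $1-2R(S)^2\neq 0$: since $X$ is positive definite, $Z$ is invertible, so $Z\mathbf{p}=0$ would force $\mathbf{p}=0$, contradicting $\mathbf{1}^T\mathbf{p}=1$. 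We may therefore divide through and set $\mathbf{w}:=\mathbf{p}/(1-2R(S)^2)$, which satisfies $Z\mathbf{w}=\mathbf{1}$ and is thus the (unique) weighting on $X$.

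Finally, taking the inner product with $\mathbf{1}$ yields
\begin{equation*}
\vert X\vert \,=\, \mathbf{1}^T\mathbf{w} \,=\, \frac{\mathbf{1}^T\mathbf{p}}{1-2R(S)^2} \,=\, \frac{1}{1-2R(S)^2},
\end{equation*}
which is \eqref{eq: main result}. There is no real obstacle here once the characterization of $R(S)$ in Theorem~\ref{thm: definition circumradius} is available: all the analytic work has already been absorbed into rewriting the circumradius condition as a linear system of the same shape as the weighting equation, and the theorem follows by a one-line normalization.
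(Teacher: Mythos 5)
Your proof is correct and follows essentially the same route as the paper: both rewrite the circumradius condition from Theorem~\ref{thm: definition circumradius} as $Z\mathbf{p}=(1-2R(S)^2)\mathbf{1}$ with $\mathbf{1}^T\mathbf{p}=1$, rescale $\mathbf{p}$ to obtain the weighting, and pair with $\mathbf{1}$. Your explicit verification that $1-2R(S)^2\neq 0$ is a small, welcome addition that the paper leaves implicit.
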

\begin{proof}
This follows immediately from the third expression for the circumradius in Theorem \ref{thm: definition circumradius}. From $Z\mathbf{x} = (1-2R(S)^2)\mathbf{1}$ with $\mathbf{x}^T\mathbf{1}=1$, we find that $\mathbf{w}=\mathbf{x}/(1-2R(S)^2)$ is the weighting on $X$. Since $\mathbf{x}$ is normalized, the claimed expression for the magnitude $\vert X\vert$ follows.
\end{proof}

\begin{remark}[Equilibrium equations]\label{rem: equilibrium equations}
With the proofs of Theorem \ref{thm: definition circumradius} and Theorem \ref{thm: magnitude circumradius} in place, we can now appreciate where the relation between magnitude and circumradius comes from: both quantities are defined via an `equilibrium equation'. In the case of magnitude, the equilibrium equation $Z\mathbf{w}=\mathbf{1}$ is part of the definition which Leinster arrived at by generalizing the Euler characteristic. In the geometric setting, the equilibrium equation $(\mathbf{11}^T-Z)\mathbf{p}=2r^2\mathbf{1}$ encodes the fact that the circumcenter is equidistant to all vertices of a simplex.
\end{remark}
\begin{corollary}
Let $X$ be a positive definite metric space. Then $\vert X\vert>1$.
\end{corollary}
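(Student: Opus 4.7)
The plan is to apply Theorem \ref{thm: magnitude circumradius} and show that $R(S)^2$ lies strictly between $0$ and $1/2$; then the formula $|X|=1/(1-2R(S)^2)$ immediately yields $|X|>1$. So the work reduces to establishing the two strict bounds on $R(S)^2$.

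For the upper bound $R(S)^2<1/2$, equivalently $1-2R(S)^2>0$, I would use positive definiteness directly. Since $Z$ is positive definite, so is $Z^{-1}$, and therefore $|X|=\mathbf{1}^T Z^{-1}\mathbf{1}>0$ because $\mathbf{1}\neq 0$. Via Theorem \ref{thm: magnitude circumradius}, this strict positivity of $|X|$ forces $1-2R(S)^2$ to be strictly positive (and in particular nonzero), so $R(S)^2<1/2$.

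For the lower bound $R(S)^2>0$, I would argue from the defining property of the similarity embedding. For any two distinct points $i,j\in X$, one has $d(i,j)>0$, hence
\[
\Vert\varphi(i)-\varphi(j)\Vert^2=1-e^{-d(i,j)}>0,
\]
so $\varphi(i)\neq\varphi(j)$. As soon as $S$ contains more than one point, the circumsphere promised by Theorem \ref{thm: definition circumradius} must have strictly positive radius; otherwise it would degenerate to a single point and could not pass through two distinct vertices of the simplex $S$. Hence $R(S)>0$.

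Combining the two bounds gives $0<1-2R(S)^2<1$, so $|X|=1/(1-2R(S)^2)>1$. The only subtle point is the degenerate case $\#X=1$, where $S$ is a single point in $\mathbb{R}^0$, $R(S)=0$, and $|X|=1$; the corollary is meaningful exactly when $\#X\geq 2$, which is the implicit setting since the lower bound step needs at least two distinct embedded points. Beyond this edge case, there is no real obstacle: the result is a direct consequence of the geometric formula plus elementary positivity of the weighting inner product.
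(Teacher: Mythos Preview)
Your argument is correct and is essentially the paper's own proof: both use Theorem~\ref{thm: magnitude circumradius} together with $|X|>0$ (from positive definiteness of $Z^{-1}$) and $R(S)>0$, the paper just packages this as $|X|=1+2|X|\,R(S)^2$ so the conclusion is immediate. Your observation that the strict inequality degenerates to equality when $\#X=1$ is a valid caveat that the paper leaves implicit.
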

\begin{proof}
This follows from $\vert X\vert = 1+2\vert X\vert R^2(S)$ with $\vert X\vert>0$ and $R(S)>0$.
\end{proof}

We can in fact turn Theorem \ref{thm: magnitude circumradius} around and use it to characterize the similarity embedding. 
\begin{theorem}\label{thm: magnitude circumradius Y}
Let $X$ be a positive definite metric space. The similarity embedding $\varphi$ is the unique embedding of $X$ into $\R^{X-1}$, up to rigid transformation, such that
$$
\vert Y\vert = \frac{1}{1-2R(\varphi(Y))^2}\textup{~for all nonempty $Y\subseteq X$.}
$$
\end{theorem}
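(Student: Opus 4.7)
The plan is to split the theorem into existence and uniqueness. For existence, note that by Proposition \ref{prop: Y positive definite} every nonempty subspace $Y\subseteq X$ is itself positive definite, and the restriction $\varphi|_Y$ embeds $Y$ as $\varphi(Y)\subseteq \R^{X-1}$ with the correct pairwise squared distances $1-e^{-d(i,j)}$. Up to an isometry of the affine span of $\varphi(Y)$ (which lives in an $(\#Y-1)$-dimensional flat), $\varphi(Y)$ is thus a similarity embedding of $Y$, so by Theorem \ref{thm: magnitude circumradius} we have $|Y|=1/(1-2R(\varphi(Y))^2)$.

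For uniqueness, I would suppose that $\psi:X\to \R^{X-1}$ is any other embedding satisfying the identity for every nonempty $Y$, and show that $\psi$ must also be a similarity embedding; Proposition \ref{prop: similarity embedding} then pins $\psi$ down up to rigid transformation. The idea is to test the identity on two-point subspaces. For $Y=\{i,j\}$ with $i\neq j$, the points $\psi(i),\psi(j)$ determine a (possibly degenerate) $0$-sphere whose circumradius is exactly half the distance, $R(\psi(Y))=\tfrac12\|\psi(i)-\psi(j)\|$. Combining this with the two-point magnitude formula $|Y|=1+\tanh(d(i,j)/2)$ recomputed in Example \ref{ex: 2-point}, and the hypothesis $|Y|=1/(1-2R(\psi(Y))^2)$, a short algebraic manipulation gives
$$\|\psi(i)-\psi(j)\|^2 \,=\, 1-e^{-d(i,j)}$$
for every pair $i,j\in X$. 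This is precisely the defining condition of a similarity embedding, so Proposition \ref{prop: similarity embedding} yields $\psi=\varphi$ up to a rigid transformation of $\R^{X-1}$.

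The main (and essentially only) obstacle is making sure the two-point case really forces the full similarity embedding; once the pairwise squared distances are fixed to $1-e^{-d(i,j)}$, the rest is immediate from Proposition \ref{prop: similarity embedding}. In particular, one does not need the higher-order subspace identities in the hypothesis — they are automatically implied once the pairwise ones hold — so the conclusion is that testing on $\#Y\le 2$ already suffices, and the stated characterization uses considerably less data than it might first appear.
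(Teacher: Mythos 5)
Your proof is correct and takes essentially the same route as the paper: existence via the fact that $\varphi|_Y$ is a similarity embedding of $Y$ combined with Theorem \ref{thm: magnitude circumradius}, and uniqueness by testing the identity on two-point subspaces to force $\|\varphi(i)-\varphi(j)\|^2=1-e^{-d(i,j)}$. Your closing remark that the two-point subspaces already pin down the embedding is also how the paper's argument works, just stated a bit more explicitly.
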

\begin{proof}
We recall from Example \ref{ex: 2-point} that the magnitude of a two-point metric space is $2/(1+e^{-d})$. Furthermore, the circumradius of two points at Euclidean distance $d'$ is $d'/2$. 

For every two-point subspace $Y_{ij}=\{i,j\}\subseteq X$, the requirement on the embedding $\varphi$ gives 
$$
\frac{2}{1+e^{-d(i,j)}} = \vert Y_{ij}\vert = \frac{1}{1-2R(\varphi(Y_{i,j}))^2} = \frac{2}{2-\Vert\varphi(i)-\varphi(j)\Vert^2} \iff \Vert\varphi(i)-\varphi(j)\Vert^2=1-e^{-d(i,j)}.
$$
The latter equation shows that any embedding with the prescribed conditions must be the similarity embedding, which is unique. It remains to prove existence, i.e., that the similarity embedding $\varphi$ satisfies the stated relation between $\vert Y\vert$ and $R(\varphi(Y))$ for subspaces $Y$ of any cardinality. This follows because the restriction $\varphi\vert_{Y}$ of the similarity embedding of $X$ onto any subspace $Y$ is the similarity embedding of $Y$, and $R(\varphi(Y))=R(\varphi\vert_Y(Y))$. The expression for $\vert Y\vert$ in terms of $R(\varphi(Y))$ is then simply Theorem \ref{thm: magnitude circumradius}.
\end{proof}

Another way to think about Theorem \ref{thm: magnitude circumradius} is that it expresses functoriality of the similarity embedding with respect to the subspace relation. This is also reflected in the matrix identity \eqref{eq: fiedler-bapat} and its implication, e.g., Theorem \ref{thm: magnitude Y general}.

To end this section, we compute the similarity embedding, magnitude and circumradius of a three-point metric space in a numerical example.
\begin{example}
Consider the three-point metric space $X=\{1,2,3\}$ with metric $d(1,2)=\ln(2)\approx 0.69$ and $d(1,3)=d(2,3)=\ln(10)\approx 4.61$. Its similarity matrix, centered similarity matrix and a similarity embedding are:
$$
Z = \begin{pmatrix}1&0.5 & 0.1\\0.5&1&0.1\\0.1&0.1&1\end{pmatrix}\quad K = \tfrac{1}{9}\begin{pmatrix}1.9&-0.35&-1.55\\-0.35&1.9&-1.55\\-1.55&-1.55&3.1\end{pmatrix} \quad \begin{aligned}
\sqrt{K}&=\Big(\varphi(1)~\varphi(2)~\varphi(3)\Big) \\&= \begin{pmatrix}\sqrt{2}/4 & -\sqrt{2}/4 & 0\\
-0.2934&-0.2934&0.5869\end{pmatrix}
\end{aligned}
$$
\end{example}

\section{Magnitude asymptotics}\label{sec: asymptotics}
As explained in the introduction, much of the theory of magnitude is concerned with studying how the magnitude $\vert tX\vert $ behaves as a function of the scale $t>0$. Our goal here is to refine the following asymptotic formula, due to Leinster and Willerton in \cite{leinster_2013_asymptotic}:
$$
\vert tX\vert= n-q(tX)\text{~with~}q(tX)\rightarrow 0\text{~as $t\rightarrow\infty$},
$$
where $n$ is the cardinality of the metric space. As a direct application of Theorem \ref{thm: magnitude circumradius}, we give an asymptotic expression for the remainder $q(tX)$ in this expression; we write $S_t:=\varphi(tX)$.
\begin{theorem}\label{thm: asymptotics of q}
Let $X$ be a metric space on $n$ points. Then we have the asymptotic equivalence
$$n-\vert tX\vert=q(tX) \,\sim\, n^2\left( \frac{n-1}{n} - 2R(S_t)^2\right).$$
\end{theorem}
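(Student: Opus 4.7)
The plan is a short algebraic reduction from Theorem \ref{thm: magnitude circumradius}, combined with the Leinster--Willerton convergence $|tX|\to n$ as $t\to\infty$.

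First, I would note that by Proposition \ref{prop: tX positive definite} the space $tX$ is positive definite for all sufficiently large $t$, so Theorem \ref{thm: magnitude circumradius} applies and yields the exact formula $|tX| = (1-2R(S_t)^2)^{-1}$. Substituting this into $q(tX) = n - |tX|$ and regrouping the numerator in order to isolate the quantity $\tfrac{n-1}{n}-2R(S_t)^2$ gives the exact (non-asymptotic) identity
\begin{equation*}
q(tX) \,=\, \frac{n\bigl(1-2R(S_t)^2\bigr) - 1}{1-2R(S_t)^2} \,=\, n\,|tX|\,\left(\frac{n-1}{n} - 2R(S_t)^2\right).
\end{equation*}

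Passing to the limit $t\to\infty$, the Leinster--Willerton convergence $|tX|\to n$ forces the prefactor $n\,|tX|$ to tend to $n^2$, from which the claimed asymptotic equivalence follows immediately.

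Given the elementary nature of this reduction, there is no real conceptual obstacle. The only technicality is to confirm that the right-hand side $n^2\bigl(\tfrac{n-1}{n}-2R(S_t)^2\bigr)$ is eventually nonzero, so that the relation $\sim$ is well-defined. This follows because the equality $\tfrac{n-1}{n}=2R(S_t)^2$ would, via Theorem \ref{thm: magnitude circumradius}, force $|tX|=n$; but a short direct argument on the weighting equations $Z\mathbf{w}=\mathbf{1}$ with $\mathbf{1}^T\mathbf{w}=n$ shows that this can only occur for metrics with $d(i,j)=\infty$ for all $i\neq j$, which is excluded by finiteness.
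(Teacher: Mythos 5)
Your proof is correct and takes essentially the same route as the paper: both start from the exact formula $|tX| = (1-2R(S_t)^2)^{-1}$ of Theorem~\ref{thm: magnitude circumradius}, rearrange algebraically to isolate the factor $\tfrac{n-1}{n}-2R(S_t)^2$, and pass to the limit using the convergence $|tX|\to n$ (equivalently, $R(S_t)^2\to\tfrac{n-1}{2n}$). Your exact identity $q(tX)=n\,|tX|\,\bigl(\tfrac{n-1}{n}-2R(S_t)^2\bigr)$ is the same as the paper's $q(tX)=n\bigl(1-\tfrac{1}{1+n\epsilon_t}\bigr)$ with $\epsilon_t=\tfrac{n-1}{n}-2R(S_t)^2$, merely written differently, and your closing remark on the well-definedness of $\sim$ is a harmless addition that the paper leaves implicit.
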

\begin{proof}
As $t\rightarrow \infty$, $tX$ is eventually positive definite (Proposition \ref{prop: tX positive definite}) and its similarity matrix $Z(tX)$ converges to the identity matrix $I$. As a result, the simplex with vertices $S_t$ converges to the regular simplex with all edge lengths equal to $\sqrt{1-z_{ij}}=1$. From the equilibrium equation, one computes that $R(S_t)^2$ converges to $\frac{n-1}{2n}$, the squared circumradius of the regular simplex.

Next, based on Theorem \ref{thm: magnitude circumradius}, we can write the remainder $q(tX)=n-\vert tX\vert$ as follows:
$$
\vert tX \vert = \frac{1}{1-2R(S_t)^2} = \frac{n}{1+n\big(\frac{n-1}{n}-2R(S_t)^2\big)}\Rightarrow q(tX) = n\left(1-\frac{1}{1+n\big(\frac{n-1}{n}-2R(S_t)^2\big)}\right)
$$
Using the asymptotic equivalence $1-\frac{1}{1+x}\sim x$ for $x\rightarrow 0$ and the earlier result that $\big(\frac{n-1}{n}-2R(S_t)^2\big)\rightarrow 0$ as $t\rightarrow 0$, we then find the claimed asymptotic equivalence.
\end{proof}

Theorem \ref{thm: asymptotics of q} states that the magnitude of a metric space $tX$ converges to the magnitude of the discrete metric space $(X,\infty)$ in the same way as the circumradius of the simplex $S_t$ converges to the circumradius of the regular simplex. We continue with magnitude asymptotics in Section \ref{sec: strong positive definite}, where we will further refine our understanding of the asymptotics by looking more closely at some implications of the convergence of $S_t$ to the regular simplex. 

\section{Matrix theory \& magnitude of subspaces}\label{sec: matrix theory}
In this section, we look more closely at the matrices $Z$ and $K$ that appeared in Section \ref{sec: magnitude circumradius}, and introduce and study the matrix $K^\dagger$. This section is partly an intermezzo, where we develop the matrix theory of magnitude for its own sake, and partly a setup for Section \ref{sec: strong positive definite}.


\subsection{Matrix theory of $Z,K$ and $K^\dagger$}
The \emph{Moore--Penrose pseudoinverse} of a matrix $M$ is the unique matrix $M^\dagger$ that satisfies
$$
\quad MM^\dagger M = M\quad\text{~and~}\quad M^\dagger MM^\dagger = M^\dagger\quad\text{~and~}\quad MM^\dagger, M^\dagger M\text{~are symmetric}.
$$
We point out two consequences of this definition: (i) if $M$ is real and symmetric, then so is $M^\dagger$ and (ii) the matrices $MM^\dagger$ and $M^\dagger M$ are orthogonal projection matrices onto $\ker(M)=\ker(M^\dagger)$ (see, e.g., \cite[\S A.3]{devriendt_thesis_2022}). We recall the definition of matrices $K$ and $K^\dagger$:
\begin{definition}
Let $X$ be a finite metric space with similarity matrix $Z$. Then
\begin{itemize}
\item The \emph{centered similarity matrix} is $K := \tfrac{1}{2}\big(I-\tfrac{\mathbf{11}^T}{n}\big)Z\big(I-\tfrac{\mathbf{11}^T}{n}\big)$.
\item The \emph{pseudoinverse centered similarity matrix} is $K^\dagger$.
\end{itemize}
\end{definition}
By their construction, we immediately find the following properties of $K$ and $K^\dagger$
\begin{proposition}
$K$ and $K^\dagger$ are real, symmetric matrices with $\ker(K)=\ker(K^\dagger)\supseteq\spn(\mathbf{1})$. 
\end{proposition}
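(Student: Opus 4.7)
The plan is to verify each of the three claims in the statement in turn, all of which follow directly from the definitions of $K$, $K^\dagger$ and the two consequences (i) and (ii) of the Moore–Penrose definition that the author has just recalled.

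First, I would establish that $K$ is real and symmetric. The metric $d$ is symmetric and real-valued, so the similarity matrix $Z$ with entries $z_{ij}=e^{-d(i,j)}$ is real and symmetric. Writing $P := I-\tfrac{\mathbf{11}^T}{n}$ for the centering matrix, which is itself real and symmetric, the defining formula $K=\tfrac{1}{2}PZP$ exhibits $K$ as a symmetric congruence of $Z$, hence real and symmetric. Realness and symmetry of $K^\dagger$ then follow immediately from the listed consequence (i).

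Second, the equality $\ker(K)=\ker(K^\dagger)$ is precisely the content of consequence (ii). Third, to obtain the inclusion $\spn(\mathbf{1})\subseteq \ker(K)$, I would observe that the centering matrix annihilates the all-ones vector, since $P\mathbf{1}=\mathbf{1}-\tfrac{\mathbf{11}^T\mathbf{1}}{n}=\mathbf{1}-\mathbf{1}=\mathbf{0}$. Hence $K\mathbf{1}=\tfrac{1}{2}PZ(P\mathbf{1})=0$, and combined with the kernel equality just established this also gives $\mathbf{1}\in\ker(K^\dagger)$.

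There is no real obstacle: the proposition is a compact bookkeeping statement, and each of its four parts is either a one-line computation or a direct invocation of the pseudoinverse facts recalled just above the statement. The only thing worth flagging is that the inclusion here is stated as $\supseteq$ rather than equality, because in general $\ker(K)$ can be strictly larger than $\spn(\mathbf{1})$ (equality occurs precisely when $X$ is positive definite, as used in Proposition~\ref{prop: similarity embedding}); this subtlety does not enter the proof itself.
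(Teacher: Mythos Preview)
Your proposal is correct and matches the paper's intended approach: the paper offers no explicit proof beyond the remark ``By their construction, we immediately find the following properties,'' and your write-up simply spells out that immediacy by invoking the symmetry of $Z$ and of the centering projector, together with the two pseudoinverse consequences (i) and (ii) recalled just before the statement. Your closing observation about when the inclusion becomes an equality is a useful gloss but, as you note, not needed for the proof itself.
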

\begin{proposition}[Interlacing]\label{prop: interlacing}
Let $\lambda_1\geq \lambda_2\geq\dots\geq \lambda_n$ be the real eigenvalues of $Z$, and $\mu_1\geq\mu_2\geq\dots\geq \mu_{n-1}$ the real eigenvalues of $K$, with one zero eigenvalue omitted\footnote{By construction we know that $K$ has at least one eigenvalue. The eigenvalues $\mu_1,\dots,\mu_{n-1}$ are all other eigenvalues except for this one zero eigenvalue; there may be other zero eigenvalues.}. Then 
$$
\lambda_1\geq 2\mu_1 \geq \lambda_2 \geq 2\mu_2 \geq \dots \geq \lambda_{n-1}\geq 2\mu_{n-1}\geq \lambda_{n},
$$
with equality throughout if $Z\mathbf{1}=\vert X\vert^{-1}\mathbf{1}$. We have $\rank(K)=\rank(K^\dagger)=\rank(Z)-1$ and if $Z$ is positive definite, then $K$ and $K^\dagger$ are positive semidefinite with $\ker(K)=\ker(K^\dagger)=\spn(\mathbf{1})$.
\end{proposition}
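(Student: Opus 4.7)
The plan is to recognize the identity $2K = PZP$, where $P = I - \tfrac{\mathbf{11}^T}{n}$ is the orthogonal projection onto $\mathbf{1}^\perp$. Factoring $P = UU^T$ for some $U \in \R^{n \times (n-1)}$ with orthonormal columns spanning $\mathbf{1}^\perp$ gives $2K = U(U^T Z U)U^T$. The $(n-1)\times(n-1)$ symmetric matrix $\tilde{Z} := U^T Z U$ has exactly the same nonzero spectrum as $2K$ restricted to $\mathbf{1}^\perp$; together with the automatic zero eigenvalue of $K$ on $\spn(\mathbf{1})$, this accounts for all of $K$'s spectrum, so the $\mu_i$'s in the statement are half the eigenvalues of $\tilde{Z}$.

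With this reformulation in place, the interlacing $\lambda_i \geq 2\mu_i \geq \lambda_{i+1}$ is an immediate application of the Poincar\'e separation theorem for symmetric compressions: for any real symmetric $Z$ and any $U$ with orthonormal columns, the eigenvalues of $U^T Z U$ interlace with those of $Z$, via a standard Courant--Fischer min--max argument over $(n-1)$-dimensional subspaces of $\R^n$. For the equality case, when $\mathbf{1}$ is an eigenvector of $Z$ with eigenvalue $c$, the matrix $Z$ preserves $\mathbf{1}^\perp$ and commutes with $P$, so $\tilde{Z}$ is precisely the restriction of $Z$ to $\mathbf{1}^\perp$, whose eigenvalues are the $n-1$ eigenvalues of $Z$ other than $c$; each interlacing inequality then saturates at one endpoint, with the saturation pattern determined by where $c$ sits in the spectrum of $Z$.

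For the rank and positive semidefiniteness claims, suppose $Z$ is positive definite. Then $\tilde{Z}$ is a positive definite $(n-1) \times (n-1)$ compression of $Z$, hence invertible, so $2K = U \tilde{Z} U^T$ is positive semidefinite with kernel $\mathrm{range}(U)^\perp = \spn(\mathbf{1})$ and rank $n-1 = \rank(Z) - 1$. The matching statements for $K^\dagger$ follow from standard facts about Moore--Penrose pseudoinverses of real symmetric matrices: $\rank(K^\dagger) = \rank(K)$, $\ker(K^\dagger) = \ker(K)$, and positive semidefiniteness is preserved under pseudoinverting. The only real care needed is the bookkeeping around the ``omitted'' zero eigenvalue of $K$ in the statement of interlacing, and tracking which eigenvector of $Z$ corresponds to $\mathbf{1}$ in the equality case; the underlying linear algebra is entirely standard, so I do not anticipate a substantial obstacle.
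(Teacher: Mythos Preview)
Your approach is essentially the paper's: the paper cites Haemers' interlacing theorem for matrix projections, which is exactly the Poincar\'e separation (Cauchy interlacing for compressions) result you spell out via the factorization $P = UU^T$, and the equality-case and positive-semidefiniteness/kernel arguments match. The only minor difference is that the paper argues the rank identity $\rank(K) = \rank(Z) - 1$ without assuming $Z$ positive definite (using that $\mathbf{1} \notin \ker(Z)$ since $Z$ is nonnegative with unit diagonal), whereas you only treat the positive definite case.
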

\begin{proof}
The eigenvalue inequalities are precisely the interlacing theorem for matrix projections \cite[Thm. 2.1]{haemers_1995_interlacing} applied to the matrices $2K$ and $Z$. If $Z\mathbf{1}=\vert X\vert^{-1}\cdot\mathbf{1}$ then $\vert X\vert^{-1}$ is an eigenvalue of $Z$ with (normalized) eigenvector $\mathbf{1}/\sqrt{n}$; let's say that it is the $j$th eigenvalue. Then $Z$ has an eigendecomposition $Z=\sum_{i\neq j}\lambda_i\mathbf{xx}^T+\vert X\vert^{-1}\tfrac{\mathbf{11}^T}{n}$ and thus by its construction $K$ has eigendecomposition $K=\sum_{i\neq j}\tfrac{1}{2}\lambda_i\mathbf{xx}^T$, which proves that $2\mu_i=\lambda_i$ for all $i\neq j$. 

The implication ($Z$ positive definite)$\implies$($K$ and $K^\dagger$ positive semidefinite) follows from the interlacing inequalities. The rank statement holds because $2K$ arises from $Z$ through a projection onto $\spn(1)^\perp$, where furthermore $\ker(Z)\not\supseteq\spn(\mathbf{1})$ since $Z$ is a nonnegative matrix with unit-diagonal. By the rank statement, $K$ and $K^\dagger$ have rank $n-1$ when $Z$ is positive definite, and thus $\ker(K)=\ker(K^\dagger)=\spn(\mathbf{1})$. This completes the proof.
\end{proof}

One example where $Z\mathbf{1}=\vert X\vert^{-1}\mathbf{1}$ holds is when the isometry group of $X$ acts transitively on its points; these are called \emph{homogeneous spaces}. We continue by showing some relations between $Z,K$ and $K^\dagger$. We start with a useful lemma:
\begin{lemma}
Let $Z$ be invertible with nonzero magnitude. Then
\begin{equation}\label{eq: ZK}
ZK^\dagger = 2I - 2\frac{\mathbf{1w}^T}{\vert X\vert}.
\end{equation}
\end{lemma}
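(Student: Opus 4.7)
The plan is to first establish the identity up to a rank-one correction along $\mathbf{1}$ using only the definition of $K$, and then pin down that correction by combining symmetry with the kernel condition $K^\dagger\mathbf{1}=0$.

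Write $P := I-\mathbf{11}^T/n$ for the orthogonal projection onto $\spn(\mathbf{1})^\perp$, so that $2K=PZP$ by definition. First I would collect the pseudoinverse facts I need: since $Z$ is invertible, $\rank(K)=n-1$ by Proposition~\ref{prop: interlacing}, so $\ker(K)=\ker(K^\dagger)=\spn(\mathbf{1})$. This yields $KK^\dagger=K^\dagger K=P$, and, using symmetry of $K^\dagger$, also $PK^\dagger=K^\dagger=K^\dagger P$ (because $\mathbf{1}^TK^\dagger=(K^\dagger\mathbf{1})^T=0$).

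Next, multiplying the relation $2K=PZP$ on the right by $K^\dagger$ gives
$$
2P \,=\, 2KK^\dagger \,=\, PZPK^\dagger \,=\, PZK^\dagger,
$$
where the last equality uses $PK^\dagger=K^\dagger$. Rewriting this as $(I-\mathbf{11}^T/n)ZK^\dagger = 2I-2\mathbf{11}^T/n$ shows that
$$
ZK^\dagger \,=\, 2I+\mathbf{1}\mathbf{c}^T
$$
for some row vector $\mathbf{c}^T$. So up to this point the claim reduces to identifying $\mathbf{c}^T=-2\mathbf{w}^T/\vert X\vert$.

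To determine $\mathbf{c}$, I would transpose the previous display and multiply by $Z^{-1}$ on the right (using that $Z^{-1}\mathbf{1}=\mathbf{w}$ by definition of the weighting), obtaining
$$
K^\dagger \,=\, 2Z^{-1}+\mathbf{c}\mathbf{w}^T.
$$
Symmetry of $K^\dagger$ and $Z^{-1}$ then forces $\mathbf{c}\mathbf{w}^T=\mathbf{w}\mathbf{c}^T$, so $\mathbf{c}=\gamma\mathbf{w}$ for some scalar $\gamma$. Finally, evaluating on $\mathbf{1}$ gives
$$
0 \,=\, K^\dagger \mathbf{1} \,=\, 2Z^{-1}\mathbf{1}+\gamma\mathbf{w}(\mathbf{w}^T\mathbf{1}) \,=\, (2+\gamma\vert X\vert)\mathbf{w},
$$
and since $\mathbf{w}\neq 0$ (as $\mathbf{1}\neq 0$) and $\vert X\vert\neq 0$, we conclude $\gamma=-2/\vert X\vert$. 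Substituting back yields the claimed identity~\eqref{eq: ZK}.

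The only non-routine step is the second one: recognising that the right-projection identity $PZK^\dagger=2P$ by itself only determines $ZK^\dagger$ modulo a rank-one tensor $\mathbf{1}\mathbf{c}^T$, and that both hypotheses of the lemma (invertibility of $Z$, via $\mathbf{w}$ being well-defined, and nonzero magnitude, via $\vert X\vert\neq 0$) are precisely what is needed to extract $\mathbf{c}$ from the combination of symmetry and $K^\dagger\mathbf{1}=0$.
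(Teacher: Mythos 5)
Your proof is correct. The first half coincides with the paper's: you right-multiply $2K = PZP$ by $K^\dagger$, use $KK^\dagger = P$ and $PK^\dagger = K^\dagger$, and conclude $ZK^\dagger = 2I + \mathbf{1}\mathbf{c}^T$ for an unknown $\mathbf{c}$. Where you diverge is in how you pin down $\mathbf{c}$. The paper does it in one line: it observes $\mathbf{w}^T Z K^\dagger = (\mathbf{Zw})^T K^\dagger = \mathbf{1}^T K^\dagger = 0$, and applies $\mathbf{w}^T$ to both sides of $ZK^\dagger = 2I - 2\mathbf{1}\mathbf{y}^T$ to read off $\mathbf{y} = \mathbf{w}/\vert X\vert$ directly. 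You instead transpose, right-multiply by $Z^{-1}$ to get $K^\dagger = 2Z^{-1} + \mathbf{c}\mathbf{w}^T$, invoke symmetry of $K^\dagger$ and $Z^{-1}$ to force $\mathbf{c} = \gamma\mathbf{w}$, and then evaluate on $\mathbf{1}$ to extract $\gamma = -2/\vert X\vert$. Your route is a few steps longer but equally sound, and it has a pleasant side effect: along the way you derive $K^\dagger = 2Z^{-1} - 2\mathbf{w}\mathbf{w}^T/\vert X\vert$, which the paper proves separately (as a consequence of this lemma) in the proposition immediately after. The paper's one-liner is tighter; your version makes explicit that the hypotheses (invertibility of $Z$, nonzero magnitude) are exactly what is needed to extract the rank-one correction.
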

\begin{proof}
We start with the definition of $K$ in terms of $Z$ and use $KK^\dagger= \big(I-\tfrac{\mathbf{11}^T}{n}\big)$:
$$
K=\tfrac{1}{2} \big(I-\tfrac{\mathbf{11}^T}{n} \big)Z\big(I-\tfrac{\mathbf{11}^T}{n}\big) 
\Rightarrow KK^\dagger = \tfrac{1}{2} \big(I-\tfrac{\mathbf{11}^T}{n}\big) ZK^\dagger \Leftrightarrow \big(I-\tfrac{\mathbf{11}^T}{n}\big) = \tfrac{1}{2} \big(I-\tfrac{\mathbf{11}^T}{n}\big)ZK^\dagger.
$$
The third equation implies that $ZK^\dagger$ must be of the form $ZK^\dagger = 2I - 2\mathbf{1y}^T$ for some $\mathbf{y}\in\R^n$. Using the additional fact that $\mathbf{w}^TZK^\dagger = 0$, we find that $\mathbf{y}=\mathbf{w}/\vert X\vert$ as required. 
\end{proof}

We now show some relations between $Z, K$ and $Z^{-1},K^\dagger$.
\begin{proposition}
The similarity and centered similarity matrices satisfy:
$$
2K-Z = \bgamma\mathbf{1}^T+\mathbf{1}\bgamma^T,\quad\textup{~where~}\bgamma \,:=\, -\tfrac{1}{n}Z\mathbf{1} + \tfrac{1}{2n^2}\mathbf{1}^TZ\mathbf{1}\cdot\mathbf{1} \,=\, \diag(K) - \tfrac{1}{2}\mathbf{1}.
$$
\end{proposition}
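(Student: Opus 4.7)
The plan is to prove the first expression by direct expansion of the definition of $K$, and then obtain the second expression by taking diagonals.

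First, I would expand
$$
2K \;=\; \big(I-\tfrac{\mathbf{11}^T}{n}\big)Z\big(I-\tfrac{\mathbf{11}^T}{n}\big) \;=\; Z - \tfrac{\mathbf{1}\mathbf{1}^T Z}{n} - \tfrac{Z\mathbf{1}\mathbf{1}^T}{n} + \tfrac{\mathbf{1}^T Z\mathbf{1}}{n^2}\,\mathbf{1}\mathbf{1}^T,
$$
so that, using the symmetry of $Z$,
$$
2K - Z \;=\; -\tfrac{Z\mathbf{1}}{n}\mathbf{1}^T \;-\; \mathbf{1}\tfrac{(Z\mathbf{1})^T}{n} \;+\; \tfrac{\mathbf{1}^T Z\mathbf{1}}{n^2}\,\mathbf{1}\mathbf{1}^T.
$$
Splitting the last term symmetrically as $\tfrac{\mathbf{1}^T Z\mathbf{1}}{2n^2}\mathbf{1}\mathbf{1}^T + \tfrac{\mathbf{1}^T Z\mathbf{1}}{2n^2}\mathbf{1}\mathbf{1}^T$ and grouping, I would recognize the right-hand side as $\bgamma\mathbf{1}^T + \mathbf{1}\bgamma^T$ with $\bgamma = -\tfrac{1}{n}Z\mathbf{1} + \tfrac{1}{2n^2}(\mathbf{1}^T Z\mathbf{1})\mathbf{1}$, which matches the first claimed form.

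For the second identity for $\bgamma$, I would take the diagonal of the just-established identity $2K - Z = \bgamma\mathbf{1}^T + \mathbf{1}\bgamma^T$. Since $\diag(\bgamma\mathbf{1}^T + \mathbf{1}\bgamma^T) = 2\bgamma$ and $\diag(Z) = \mathbf{1}$ (because $z_{ii}=e^{-d(i,i)}=1$), this yields
$$
2\diag(K) - \mathbf{1} \;=\; 2\bgamma \quad\Longleftrightarrow\quad \bgamma \;=\; \diag(K) - \tfrac{1}{2}\mathbf{1},
$$
giving the alternative expression.

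There is no real obstacle here; the proof is a routine expansion. The only mild subtlety is noticing that the rank-one term $\tfrac{\mathbf{1}^T Z\mathbf{1}}{n^2}\mathbf{1}\mathbf{1}^T$ must be split symmetrically between the two summands to recover the clean form $\bgamma\mathbf{1}^T + \mathbf{1}\bgamma^T$, and remembering that $Z$ has unit diagonal when extracting the diagonal identity.
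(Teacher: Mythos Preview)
Your proof is correct and follows essentially the same approach as the paper: expand the definition of $K$ to obtain $2K - Z = -\tfrac{1}{n}Z\mathbf{11}^T - \tfrac{1}{n}\mathbf{11}^TZ + \tfrac{1}{n^2}\mathbf{1}^TZ\mathbf{1}\cdot\mathbf{11}^T$, then use $\diag(Z)=\mathbf{1}$ for the second expression of $\bgamma$. You are slightly more explicit than the paper in spelling out the symmetric splitting of the rank-one term and in taking diagonals, but the argument is the same.
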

\begin{proof}
We note that $Z$ may be singular. Starting from the definition of $K$, we find
$$
2K-Z = -\tfrac{1}{n}Z\mathbf{11}^T - \tfrac{1}{n}\mathbf{11}^TZ + \tfrac{1}{n^2}\mathbf{11}^TZ\mathbf{11}^T,
$$
which proves the expression for $2K-Z$. The second expression of $\bgamma$ follows from $\diag(Z)=\mathbf{1}$.
\end{proof}
\begin{proposition}
Let $Z$ be invertible with nonzero magnitude. Then
$$
2Z^{-1} - K^\dagger = 2\frac{\mathbf{ww}^T}{\vert X\vert}, \quad\textup{~where~}\begin{cases}\tfrac{\mathbf{w}}{\vert X\vert} = \tfrac{1}{2}K^\dagger\bgamma + \tfrac{1}{n}\mathbf{1};\\
\vert X\vert^{-1} = -\tfrac{1}{2}\bgamma^TK^\dagger\bgamma - \tfrac{2}{n}\mathbf{1}^T\bgamma.
\end{cases}
$$
\end{proposition}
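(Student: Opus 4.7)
The plan is to derive all three identities from the key formula $ZK^\dagger = 2I - 2\mathbf{1w}^T/\vert X\vert$ of equation (\ref{eq: ZK}), combined with the companion identity $2K = Z+\bgamma\mathbf{1}^T+\mathbf{1}\bgamma^T$ from the previous proposition, and the two ``annihilation'' facts that $K\mathbf{1}=0$ (whence $\mathbf{1}^TK=0$ and $K^\dagger\mathbf{1}=0=\mathbf{1}^TK^\dagger$) and that $Z^{-1}\mathbf{1}=\mathbf{w}$. Together these dispose of a lot of clutter involving $\mathbf{1}$-factors.

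For the matrix identity, I would just left-multiply (\ref{eq: ZK}) by $Z^{-1}$:
$$
K^\dagger = 2Z^{-1} - 2\,\tfrac{(Z^{-1}\mathbf{1})\mathbf{w}^T}{\vert X\vert} = 2Z^{-1} - 2\,\tfrac{\mathbf{ww}^T}{\vert X\vert},
$$
rearranged to the claimed form. For the formula $\mathbf{w}/\vert X\vert = \tfrac{1}{2}K^\dagger\bgamma+\tfrac{1}{n}\mathbf{1}$, I would apply the preceding proposition's identity $2K=Z+\bgamma\mathbf{1}^T+\mathbf{1}\bgamma^T$ to the vector $\mathbf{w}$ and use $Z\mathbf{w}=\mathbf{1}$ and $\mathbf{1}^T\mathbf{w}=\vert X\vert$ to get
$$
2K\mathbf{w} \;=\; (1+\bgamma^T\mathbf{w})\mathbf{1} + \vert X\vert\,\bgamma. \tag{$\star$}
$$
Applying $K^\dagger$ on the left kills the $\mathbf{1}$-term since $K^\dagger\mathbf{1}=0$, and $K^\dagger K = I - \mathbf{11}^T/n$ by Proposition \ref{prop: interlacing}, so the left-hand side becomes $2\mathbf{w} - (2\vert X\vert/n)\mathbf{1}$. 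Dividing by $2\vert X\vert$ yields the claim.

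Finally, for $\vert X\vert^{-1}$ I would extract a second scalar identity from ($\star$) by applying $\mathbf{1}^T$ on the left: since $\mathbf{1}^TK = 0$, the left-hand side vanishes, giving $0 = n(1+\bgamma^T\mathbf{w}) + \vert X\vert\,\mathbf{1}^T\bgamma$, i.e.\ $\bgamma^T\mathbf{w}/\vert X\vert = -1/\vert X\vert - \mathbf{1}^T\bgamma/n$. Independently, I would contract the already-proved formula for $\mathbf{w}/\vert X\vert$ with $\bgamma^T$ on the left to get a second expression $\bgamma^T\mathbf{w}/\vert X\vert = \tfrac{1}{n}\bgamma^T\mathbf{1}+\tfrac{1}{2}\bgamma^TK^\dagger\bgamma$; equating the two expressions and solving for $\vert X\vert^{-1}$ produces exactly $-\tfrac{1}{2}\bgamma^TK^\dagger\bgamma - \tfrac{2}{n}\mathbf{1}^T\bgamma$. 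There is no real obstacle here; the only care needed is to notice that the hypothesis ``$\vert X\vert\neq 0$'' is what makes the divisions by $\vert X\vert$ legitimate, and that the two kernel facts $K^\dagger\mathbf{1}=0$ and $\mathbf{1}^TK=0$ must each be used exactly once to collapse the two ``book-keeping'' terms involving $\bgamma^T\mathbf{w}$.
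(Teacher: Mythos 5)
Your proof is correct, and the last two identities are obtained by a somewhat different route than the paper takes. The matrix identity is proved the same way (multiply (\ref{eq: ZK}) by $Z^{-1}$, noting that left- or right-multiplication works equally well once one transposes). For the remaining two identities, however, you organize everything around the single ``master'' equation
$$
2K\mathbf{w} \;=\; (1+\bgamma^T\mathbf{w})\mathbf{1} + \vert X\vert\,\bgamma,
$$
obtained from the companion identity $2K-Z=\bgamma\mathbf{1}^T+\mathbf{1}\bgamma^T$, and then read off both the vector identity (by applying $K^\dagger$ on the left, using $K^\dagger K = I-\mathbf{11}^T/n$) and the auxiliary scalar fact $\bgamma^T\mathbf{w}/\vert X\vert = -1/\vert X\vert -\mathbf{1}^T\bgamma/n$ (by applying $\mathbf{1}^T$ on the left). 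The paper instead substitutes the explicit formula $\bgamma=-\tfrac{1}{n}Z\mathbf{1}+\tfrac{1}{2n^2}\mathbf{1}^TZ\mathbf{1}\cdot\mathbf{1}$ into $K^\dagger\bgamma$ and into $\mathbf{w}^T\bgamma$, and simplifies by direct computation each time; this avoids invoking the $2K-Z$ identity but involves more term-by-term bookkeeping. Both approaches contract the already-established vector identity with $\bgamma^T$ to get the final scalar identity. Your version is arguably cleaner, since the one equation $(\star)$ does double duty and the two projections $K^\dagger(\cdot)$ and $\mathbf{1}^T(\cdot)$ are dual. One small note: the fact $K^\dagger K = I-\mathbf{11}^T/n$ is not literally stated in Proposition \ref{prop: interlacing}; it follows from the rank statement there (which gives $\ker K = \spn(\mathbf{1})$ for invertible $Z$) together with the general fact that $K^\dagger K$ is orthogonal projection onto $(\ker K)^\perp$. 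The paper's Lemma already uses $KK^\dagger = I-\mathbf{11}^T/n$ in its proof, so this is an available tool, but it would be worth saying a word more about why it holds under the mere assumption that $Z$ is invertible (not necessarily positive definite).
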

\begin{proof}
Right-multiplying equation \eqref{eq: ZK} by $Z^{-1}$ proves the expression for $2Z^{-1}-K^\dagger$. We now prove the expression for $\mathbf{w}/\vert X\vert$ in terms of $K^\dagger$ and $\bgamma$.
\begin{align*}
K^\dagger = 2Z^{-1}-2\tfrac{\mathbf{ww}^T}{\vert X\vert} \Rightarrow K^\dagger\bgamma &= \big(2Z^{-1}-2\tfrac{\mathbf{ww}^T}{\vert X\vert}\big)\big(-\tfrac{1}{n}Z\mathbf{1}+\tfrac{1}{2n^2}\mathbf{1}^TZ\mathbf{1}\cdot\mathbf{1}\big)
\\
&= -\tfrac{2}{n}\mathbf{1} + \tfrac{2}{\vert X\vert}\mathbf{w} \Rightarrow \tfrac{1}{2}K^\dagger\bgamma = -\tfrac{1}{n}\mathbf{1}+\tfrac{\mathbf{w}}{\vert X\vert}.
\end{align*}
Finally, we prove the expression for $\vert X\vert^{-1}$ in terms of $K^\dagger$ and $\bgamma$.
\begin{align*}
\tfrac{1}{2}K^\dagger\bgamma + \tfrac{1}{n}\mathbf{1} = \tfrac{\mathbf{w}}{\vert X\vert} \Rightarrow \tfrac{1}{2}\bgamma^TK^\dagger\bgamma + \tfrac{1}{n}\bgamma^T\mathbf{1} &= \tfrac{\mathbf{w}^T\bgamma}{\vert X\vert}
\\
&= \tfrac{\mathbf{w}^T}{\vert X\vert}\big(-\tfrac{1}{n}Z\mathbf{1} + \tfrac{1}{2n^2}\mathbf{1}^TZ\mathbf{1}\cdot\mathbf{1}\big)
\\
&= -\tfrac{1}{\vert X\vert} + \tfrac{1}{2n^2}\mathbf{1}^T Z\mathbf{1}
\\
&= - \tfrac{1}{\vert X\vert} - \tfrac{1}{n}\bgamma^T\mathbf{1},
\end{align*}
which leads to the claimed expression for $\vert X\vert^{-1}$ and completes the proof.
\end{proof}
\\~\\
We focus on one further `object' that turns out to play an important role in the matrix theory and the asymptotics of magnitude: the off-diagonal entries of matrix $K^\dagger$. Later, in Section \ref{sec: strong positive definite}, we will show that large scales $t\gg 0$ are marked by negativity of these entries, which in turn leads to additional algebraic and geometric properties. We introduce the notation
$$
c_{ij} := -(K^\dagger)_{ij} = 2\tfrac{w_iw_j}{\vert X\vert} - 2(Z^{-1})_{ij}\text{,~for $i\neq j\in X$} \quad\text{~and~}\quad \bar{c}_i := (K^\dagger)_{ii} = \sum_{j\in X}c_{ij}.
$$
The relation between $\bar{c_i}$ and $\sum_{j}c_{ij}$ follows because the rows of $K^\dagger$ sum to zero. As a notational convention, when summing over $j\in X$ or $i,j\in X$ with $c_{ij}$ in the sum, we consider each pair only once (i.e., with $(i,j)=(j,i)$); think of $c$ as a function on unordered pairs of distinct points. 

We give expressions for magnitude and weightings in terms of $c$.
\begin{proposition}\label{prop: magnitude and c}
Let $Z$ be invertible with nonzero magnitude. Then
$$
\frac{w_i}{\vert X\vert} = 1 - \frac{1}{2}\sum_{j\in X}c_{ij}(1 - z_{ij})\quad\textup{~and~}\quad \vert X\vert^{-1} = 1 - \sum_{i,j\in X} c_{ij}(z_{ix}-z_{jx})^2\textup{~for any $x\in X$}.
$$
\end{proposition}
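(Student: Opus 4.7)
The plan is to reduce both identities to the matrix identity
\[
K^\dagger \,=\, 2Z^{-1} - \tfrac{2}{\vert X\vert}\mathbf{w}\mathbf{w}^T,
\]
which has already been established in the excerpt, together with the defining relations $Z\mathbf{w}=\mathbf{1}$, $\mathbf{1}^T\mathbf{w}=\vert X\vert$, and the identity $\sum_{j}(Z^{-1})_{ij}z_{ij}=(Z^{-1}Z)_{ii}=1$. The notational fact that $c$ is supported on unordered pairs means the sum $\sum_{j\in X}c_{ij}(1-z_{ij})$ tacitly ranges over $j\neq i$, which is anyway harmless since $1-z_{ii}=0$.

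For the first identity, I would simply expand $c_{ij}=2w_iw_j/\vert X\vert-2(Z^{-1})_{ij}$ and split the sum into two pieces. The weighting piece $\tfrac{2w_i}{\vert X\vert}\sum_{j\neq i}w_j(1-z_{ij})$ collapses using $\sum_j w_j=\vert X\vert$ and $\sum_j w_jz_{ij}=(Z\mathbf{w})_i=1$, yielding $\tfrac{2w_i(\vert X\vert-1)}{\vert X\vert}$. The pseudoinverse piece $-2\sum_{j\neq i}(Z^{-1})_{ij}(1-z_{ij})$ collapses by $\sum_j(Z^{-1})_{ij}=w_i$ and $\sum_j(Z^{-1})_{ij}z_{ij}=1$, yielding $-2(w_i-1)$. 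Adding these two contributions gives $2-2w_i/\vert X\vert$, which rearranges to the claim.

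For the second identity, the key structural observation is that $K^\dagger$ has the shape of a \emph{weighted graph Laplacian} with edge weights $c_{ij}$: its off-diagonal entries are $-c_{ij}$ and its diagonal entries $\bar c_i$ are precisely the row sums (as noted right after the definition of $c$). Consequently the standard quadratic-form identity gives
\[
\mathbf{a}^T K^\dagger \mathbf{a} \,=\, \sum_{\{i,j\}}c_{ij}(a_i-a_j)^2
\]
for any vector $\mathbf{a}$, which converts the right-hand side of the claim (up to a factor $\tfrac12$) into the quadratic form $\mathbf{z}_x^T K^\dagger \mathbf{z}_x$ evaluated at the $x$th column $\mathbf{z}_x$ of $Z$. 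I would then compute this quadratic form directly from the matrix identity: $\mathbf{z}_x^T Z^{-1}\mathbf{z}_x=\mathbf{e}_x^T Z\mathbf{e}_x=z_{xx}=1$, and $\mathbf{w}^T\mathbf{z}_x=(Z\mathbf{w})_x=1$, so $\mathbf{z}_x^T K^\dagger\mathbf{z}_x=2-2/\vert X\vert$. Rearranging yields the stated expression for $\vert X\vert^{-1}$ (note that the answer is independent of $x$, as required).

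There is no genuinely hard step here; both identities are routine once the right reformulation is in hand. The only real insight is recognising $K^\dagger$ as a Laplacian in the variables $c_{ij}$, after which the second identity becomes a two-line computation using $Z^{-1}\mathbf{z}_x=\mathbf{e}_x$. The main pitfall is a bookkeeping one: being careful with the unordered-pair convention, and with factors of $\tfrac12$ that arise when one moves between $\sum_{\{i,j\}}$, $\sum_{i\neq j}$, and the quadratic form $\mathbf{a}^T K^\dagger \mathbf{a}$.
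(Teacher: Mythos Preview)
Your proposal is correct and follows essentially the same route as the paper: both recognise $K^\dagger$ as a Laplacian in the weights $c_{ij}$ so that the second identity reduces to computing $\mathbf{z}_x^T K^\dagger \mathbf{z}_x=(ZK^\dagger Z)_{xx}=2-2/\vert X\vert$. The only cosmetic difference is in the first identity, where the paper reads off the $i$th diagonal entry of $ZK^\dagger=2I-2\mathbf{1w}^T/\vert X\vert$ directly rather than expanding $c_{ij}=2w_iw_j/\vert X\vert-2(Z^{-1})_{ij}$ as you do; the two computations are equivalent.
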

\begin{proof}
The expression for $w_i$ follows by taking the $i$th diagonal entry of equation \eqref{eq: ZK}. For the expression of $\vert X\vert^{-1}$ we start by the observation that, due to $\ker(K^\dagger)=\spn(\mathbf{1})$, we can write
$$
K^\dagger = \sum_{i,j\in X}c_{ij}(\mathbf{e}_i-\mathbf{e}_j)(\mathbf{e}_i-\mathbf{e}_j)^T \,\Rightarrow\, \tfrac{1}{2}\mathbf{y}^TK^\dagger\mathbf{y} = \sum_{i,j\in X}c_{ij}(y_i-y_j)^2 \text{~for any $\mathbf{y}\in\R^X$.}
$$
Right-multiplying equation \eqref{eq: ZK} by $Z$ and looking at the $x$th diagonal entry, we then find
$$
(ZK^\dagger Z)_{xx} = 2 - 2\vert X\vert^{-1} \Leftrightarrow \vert X\vert^{-1} = 1-\sum_{i,j \in X}c_{ij}(z_{ix}-z_{jx})^2.
$$
We note that a factor of $\tfrac{1}{2}$ is incorporated in the sum because of our notational convention.
\end{proof}

\begin{corollary}
Let $Z$ be invertible with nonzero magnitude. Then $\sum_{i,j\in X}c_{ij}(1 - z_{ij})=n-1$.
\end{corollary}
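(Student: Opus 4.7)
The plan is to derive the identity by summing the first formula of Proposition \ref{prop: magnitude and c} over all $i \in X$. Recall that it reads
$$
\frac{w_i}{\vert X\vert} = 1 - \frac{1}{2}\sum_{j\in X}c_{ij}(1 - z_{ij}),
$$
where, by the paper's convention, $c$ is a function on unordered pairs of distinct points, so for fixed $i$ the inner sum really ranges over $j \neq i$ and counts each pair $\{i,j\}$ once.

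Summing the left-hand side over $i \in X$ gives $\sum_{i} w_i / \vert X\vert = \vert X\vert/\vert X\vert = 1$ by the definition of magnitude. For the right-hand side, I would sum $1$ over $i \in X$ to get $n$, and then handle the double sum $\sum_{i}\sum_{j\neq i} c_{ij}(1-z_{ij})$. Here each unordered pair $\{i,j\}$ is counted twice (once when the outer index is $i$ and once when it is $j$), so this double sum equals $2\sum_{i,j\in X} c_{ij}(1-z_{ij})$ in the paper's convention. Thus
$$
1 \;=\; n - \tfrac{1}{2}\cdot 2\sum_{i,j\in X} c_{ij}(1-z_{ij}) \;=\; n - \sum_{i,j\in X} c_{ij}(1-z_{ij}),
$$
and rearranging yields $\sum_{i,j\in X} c_{ij}(1-z_{ij}) = n-1$ as claimed.

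There is no real obstacle beyond the bookkeeping required by the unordered-pair convention; the result is a one-line summation identity once one is careful about the factor of two arising from double counting. An alternative route — essentially equivalent — would be to take the trace of equation \eqref{eq: ZK}, namely $\tr(ZK^\dagger) = 2n - 2\tr(\mathbf{1}\mathbf{w}^T)/\vert X\vert = 2n - 2$, and then expand $\tr(ZK^\dagger) = \sum_{i,j} z_{ij}(K^\dagger)_{ij}$ using $(K^\dagger)_{ii} = \sum_{j\neq i} c_{ij}$ and $(K^\dagger)_{ij} = -c_{ij}$ for $i\neq j$; this directly produces $2\sum_{i,j\in X} c_{ij}(1 - z_{ij}) = 2(n-1)$. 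I would likely present the first (shorter) approach since it uses the previous proposition as a black box.
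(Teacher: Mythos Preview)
Your proof is correct and matches the paper's own argument exactly: the paper also proves this by summing the expression for $w_i/\vert X\vert$ from Proposition~\ref{prop: magnitude and c} over $i\in X$ and using $\mathbf{1}^T\mathbf{w}=\vert X\vert$. Your careful handling of the factor of two from the unordered-pair convention is correct, and the alternative trace argument you sketch is a valid equivalent route.
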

\begin{proof}
This follows by summing $w_i/\vert X\vert$ in Proposition \ref{prop: magnitude and c}, and $\mathbf{1}^T\mathbf{w}=\vert X\vert$.
\end{proof}
\begin{remark}[Foster's theorem] 
The direct analogue of this corollary is called Foster's Theorem in other settings. One hint at the depth of this theorem is that the integer on the righthandside is known to count, depending on the situation, a dimension, a degree or an Euler characteristic.
\end{remark}

\begin{corollary}
Let $Z$ be invertible with nonzero magnitude. Then
$$
\vert X\vert^{-1} \,\,=\,\, \tfrac{1}{n}\tr\Big(\big(I-\tfrac{1}{2}ZK^\dagger Z\big)\Big).
$$
\end{corollary}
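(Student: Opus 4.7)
The plan is to read the corollary as a trivial consequence of what was just proved in Proposition \ref{prop: magnitude and c}; no new ideas are required. In that proof, right-multiplying equation \eqref{eq: ZK} by $Z$ gave the diagonal identity $(ZK^\dagger Z)_{xx} = 2 - 2\vert X\vert^{-1}$, valid for every $x\in X$. The crucial observation is that this common value does not depend on $x$, so summing over the $n$ diagonal entries simply multiplies it by $n$.

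Concretely, I would proceed as follows. First, recall the identity $(ZK^\dagger Z)_{xx} = 2 - 2\vert X\vert^{-1}$ for each $x\in X$, established in the proof of Proposition \ref{prop: magnitude and c}. Next, sum over $x\in X$ to obtain
$$
\tr(ZK^\dagger Z) \,=\, \sum_{x\in X}(ZK^\dagger Z)_{xx} \,=\, 2n - 2n\vert X\vert^{-1}.
$$
Then compute
$$
\tfrac{1}{n}\tr\bigl(I - \tfrac{1}{2}ZK^\dagger Z\bigr) \,=\, \tfrac{1}{n}\bigl(n - \tfrac{1}{2}(2n - 2n\vert X\vert^{-1})\bigr) \,=\, \vert X\vert^{-1},
$$
which is the claim.

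There is essentially no obstacle here; the content is entirely in Proposition \ref{prop: magnitude and c}. The only minor point worth noting is that the invertibility of $Z$ and nonvanishing of $\vert X\vert$ assumed in the hypothesis are exactly what is needed to apply the diagonal identity from the earlier proof, so no further justification is required.
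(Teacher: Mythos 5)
Your proof is correct and follows essentially the same route as the paper: both sum the $x$-indexed identity from Proposition \ref{prop: magnitude and c} over all $x\in X$ and rearrange. You work directly with $(ZK^\dagger Z)_{xx}$ rather than the equivalent $c_{ij}$-sum form, but this is the same observation phrased in matrix rather than entrywise notation.
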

\begin{proof}
This follows by summing the expression for $\vert X\vert^{-1}$ in Proposition \ref{prop: magnitude and c} over all $x\in X$.
\end{proof} 

To end the section, we prove a matrix identity that relates the similarity and pseudoinverse centered similarity matrices of a metric space. This identity summarizes several of the expressions above, and it will be the basis to derive explicit relations between different subspaces. 
\begin{theorem}\label{thm: fiedler-bapat}
Let $X$ be a metric space with invertible $Z$ and nonzero magnitude. Then
\begin{equation}\label{eq: fiedler-bapat}
\begin{pmatrix}0 & \mathbf{1}^T\\\mathbf{1}& Z \end{pmatrix}^{-1}
\,=\,\,\,
\begin{pmatrix}-\vert X\vert^{-1} & \mathbf{w}^T/\vert X\vert \\ \mathbf{w}/\vert X\vert& \tfrac{1}{2} K^\dagger\end{pmatrix}.
\end{equation}
\end{theorem}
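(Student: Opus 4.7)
The natural strategy is to verify the identity by direct multiplication: show that the product of the left-hand side matrix with the right-hand side matrix equals the $(n+1) \times (n+1)$ identity, and conclude by uniqueness of the inverse. All the ingredients needed for this verification have already appeared earlier in the section.

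Explicitly, I would set
\[
A = \begin{pmatrix}0 & \mathbf{1}^T\\\mathbf{1}& Z \end{pmatrix},
\qquad
B = \begin{pmatrix}-\vert X\vert^{-1} & \mathbf{w}^T/\vert X\vert \\ \mathbf{w}/\vert X\vert& \tfrac{1}{2} K^\dagger\end{pmatrix},
\]
and compute the four blocks of $AB$ in turn. The top-left block is $\mathbf{1}^T\mathbf{w}/\vert X\vert = 1$ by definition of magnitude. The top-right block is $\tfrac{1}{2}\mathbf{1}^T K^\dagger = \mathbf{0}^T$, because $\mathbf{1} \in \ker(K^\dagger)$. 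The bottom-left block is $-\mathbf{1}/\vert X\vert + Z\mathbf{w}/\vert X\vert$, which vanishes because $Z\mathbf{w}=\mathbf{1}$ (the weighting equation, which applies since $Z$ is invertible and $\mathbf{w}=Z^{-1}\mathbf{1}$).

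The one block that requires actual input from this section is the bottom-right block, $\mathbf{1}\mathbf{w}^T/\vert X\vert + \tfrac{1}{2}ZK^\dagger$. Here I would invoke equation \eqref{eq: ZK}, which gives $\tfrac{1}{2}ZK^\dagger = I - \mathbf{1}\mathbf{w}^T/\vert X\vert$, so the two pieces cancel and leave $I$. This is really the only nontrivial step; everything else is bookkeeping. Since $A$ and $B$ are square matrices of the same size, $AB = I_{n+1}$ forces $A$ to be invertible with $A^{-1} = B$, completing the proof.

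The main obstacle, to the extent there is one, is recognizing up front that the identity \eqref{eq: ZK} is exactly the relation needed for the bottom-right block to collapse to the identity; once that is clear, the computation is mechanical. No further clever manipulation (e.g.\ Schur complements or block-inversion formulas) is required, although one could alternatively derive the statement by applying the Schur complement formula with pivot $0$ in the top-left corner, using that the Schur complement $-\mathbf{1}^T Z^{-1}\mathbf{1} \cdot \text{(something)}$ connects directly to $\vert X\vert^{-1}$; this route is less elegant because it obscures the role of $K^\dagger$.
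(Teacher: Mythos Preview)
Your proposal is correct and is essentially identical to the paper's own proof: the paper also multiplies the two matrices, identifies the same four blocks, and dispatches them using (in order) the definition of magnitude, $\mathbf{1}\in\ker(K^\dagger)$, the weighting equation $Z\mathbf{w}=\mathbf{1}$, and equation \eqref{eq: ZK}. Your remark that $AB=I$ forces invertibility of $A$ is a small clarifying detail the paper leaves implicit.
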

\begin{proof}
We verify that multiplying the two sides of equation \eqref{eq: fiedler-bapat} gives the identity matrix:
$$
\begin{pmatrix}
\mathbf{1^Tw}/\vert X\vert 
& \tfrac{1}{2}\mathbf{1}^TK^\dagger
\\
\vert X\vert^{-1}\cdot(Z\mathbf{w}-\mathbf{1}) 
& 
\mathbf{1w^T}/\vert X\vert +\tfrac{1}{2} ZK^\dagger
\end{pmatrix} \,\,=\,\, \begin{pmatrix}
1&0\\0&I
\end{pmatrix}.
$$
The top-left equation holds by definition of magnitude. The top-right equation holds because $\ker(K^\dagger)=\spn(\mathbf{1})$. The bottom-left equation holds because $Z\mathbf{w}=\mathbf{1}$. The bottom-right equation holds by equation \eqref{eq: ZK}.
\end{proof}
\begin{remark}[Cayley--Menger matrix]
Theorem \ref{thm: fiedler-bapat} mimics a matrix identity in simplex geometry and algebraic graph theory; we called this the \emph{Fiedler--Bapat identity} in \cite[\S3]{devriendt_thesis_2022}. In that context, the matrix $\left(\begin{smallmatrix}0& \mathbf{1}^T\\\mathbf{1}&D\end{smallmatrix}\right)$ is called the \emph{Cayley--Menger matrix}, where $D$ contains the squared pairwise distances between a set of points. This matrix is an important algebraic object in geometry; for instance, its determinant encodes affine (in-)dependence of the set of points.
\end{remark}
We note the following rational expression for $\vert X\vert$ in variables $(z_{ij})$ that follows from \eqref{eq: fiedler-bapat}:
\begin{proposition}\label{prop: magnitude determinant}
Let $X$ be a positive definite metric space. Then
$$
\vert X\vert \,=\, -\det\begin{pmatrix}0&\mathbf{1}^T\\\mathbf{1}&Z\end{pmatrix}/\det (Z).
$$
\end{proposition}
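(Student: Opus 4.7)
The plan is to derive this formula directly from the Schur complement determinant identity, using the fact that $Z$ is invertible whenever $X$ is positive definite.

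First, I would recall the standard block-matrix determinant formula: for any square matrix with an invertible bottom-right block,
\[
\det\begin{pmatrix}A & B\\ C & D\end{pmatrix} \,=\, \det(D)\cdot\det\bigl(A - BD^{-1}C\bigr).
\]
Since $X$ is positive definite, $Z$ is positive definite and in particular invertible with $\det(Z)>0$. Applying the identity with $A=0$ (a $1\times 1$ block), $B=\mathbf{1}^T$, $C=\mathbf{1}$, and $D=Z$, the Schur complement reduces to a scalar:
\[
\det\begin{pmatrix}0 & \mathbf{1}^T\\ \mathbf{1} & Z\end{pmatrix} \,=\, \det(Z)\cdot\bigl(0 - \mathbf{1}^TZ^{-1}\mathbf{1}\bigr) \,=\, -\det(Z)\cdot \vert X\vert,
\]
where the last equality uses the explicit formula $\vert X\vert=\mathbf{1}^TZ^{-1}\mathbf{1}$ that is available because $Z$ is invertible. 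Dividing through by $-\det(Z)$ yields the claimed identity.

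There is essentially no obstacle here; the only thing to check is that $Z$ is indeed invertible so that the Schur complement formula applies and $\mathbf{1}^TZ^{-1}\mathbf{1}$ makes sense, and this is immediate from positive definiteness. As an alternative route, one could instead read the identity off of Theorem \ref{thm: fiedler-bapat} directly: the $(1,1)$ entry of the inverse block matrix is $-\vert X\vert^{-1}$, and by the cofactor expression for matrix inverses, this entry equals $\det(Z)/\det\left(\begin{smallmatrix}0&\mathbf{1}^T\\\mathbf{1}&Z\end{smallmatrix}\right)$ (the principal minor after deleting row and column $0$ is simply $Z$); rearranging gives the same formula. Either path is short, but the Schur complement version has the advantage of not requiring the full matrix identity \eqref{eq: fiedler-bapat} as input.
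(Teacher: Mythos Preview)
Your proof is correct. The paper does not spell out a proof but simply remarks that the identity ``follows from \eqref{eq: fiedler-bapat}'', i.e., it takes exactly the alternative route you describe at the end: read off the $(1,1)$ entry of the inverse from Theorem~\ref{thm: fiedler-bapat} and use the cofactor formula. Your primary argument via the Schur complement determinant identity is a slightly more self-contained variant that bypasses Theorem~\ref{thm: fiedler-bapat} entirely, needing only invertibility of $Z$ and the formula $\vert X\vert=\mathbf{1}^TZ^{-1}\mathbf{1}$; both routes are essentially one line.
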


\subsection{Magnitude and weightings for subspaces}
We now apply Theorem \ref{thm: fiedler-bapat} to study the magnitude of subspaces of a metric space; we start by fixing some notation. We write $Y\subseteq X$ to denote the metric subspace $(Y,d\vert_Y)\subseteq (X,d)$, and write $d$ for the metric $d\vert_Y$; unless stated otherwise, we assume $Y$ nonempty. We write $Y^c=X\setminus Y$ and use the following subscript notation for submatrices:
$$
Z = \begin{pmatrix}
Z_{YY} & Z_{YY^c}\\
Z_{Y^c Y} & Z_{Y^cY^c}
\end{pmatrix}\quad\text{~and~}\quad\mathbf{w} = \begin{pmatrix}\mathbf{w}_Y\\\mathbf{w}_{Y^c}\end{pmatrix}.
$$
Since the metric on $Y$ is simply the restriction of $d$, it follows that the similarity matrix $Z(Y)$ is the principal submatrix $Z_{YY}$ of $Z(X)$. We recall the definition of Schur complements:
\begin{definition}[Schur complement]
Let $M=\left(\begin{smallmatrix}M_{YY}&M_{YY^c}\\M_{Y^cY}&M_{Y^cY^c}\end{smallmatrix}\right)$ be a matrix such that $M_{Y^cY^c}$ is invertible. The \emph{Schur complement} of $M$ with respect to $Y^c$ is the matrix\footnote{This notation is not standard; we use it to keep the focus on the underlying metric space $X$ and its subspaces.} $M/Y^c$, with rows and columns indexed by $Y$, defined by
$$
M/Y^c := M_{YY} - M_{YY^c}(M_{Y^cY^c})^{-1}M_{Y^cY}.
$$
\end{definition}

\begin{proposition}[{\cite{haynsworth_schur_1968}}]\label{prop: schur complement}
Let $M$ be a matrix such that $M/Y^c$ is well-defined. Then
\begin{itemize}
\item If $M$ is invertible, then $(M^{-1})_{YY} = (M/Y^c)^{-1}$.
\item $M/Y^c= ((M/x_1)/\dots)/x_\ell$ with entries $Y^c=\{x_1,\dots,x_\ell\}$ in any order.
\end{itemize} 
\end{proposition}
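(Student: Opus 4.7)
Both statements are classical facts about Schur complements, so the plan is to prove them as self-contained linear-algebra identities: first the block-inverse formula, then the quotient identity by induction on $\ell = |Y^c|$.

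For the first claim, I would partition $M$ as $\left(\begin{smallmatrix} A & B \\ C & D\end{smallmatrix}\right)$ with rows and columns indexed by $Y$ and $Y^c$, so that $D = M_{Y^c Y^c}$ is invertible by hypothesis and $M/Y^c = A - BD^{-1}C$. The cleanest route is the block UDL factorization
\[
M \;=\; \begin{pmatrix} I & BD^{-1} \\ 0 & I \end{pmatrix}\begin{pmatrix} M/Y^c & 0 \\ 0 & D \end{pmatrix}\begin{pmatrix} I & 0 \\ D^{-1}C & I \end{pmatrix},
\]
which displays $M$ as a product of matrices whose inverses are trivial to write down (the outer two are unit triangular, and the middle factor is invertible precisely when $M/Y^c$ is, i.e., when $M$ itself is). Multiplying the three inverses and reading off the $(Y,Y)$-block immediately yields $(M^{-1})_{YY} = (M/Y^c)^{-1}$. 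Alternatively, one could write $M^{-1}$ in block form with unknown blocks and solve $MM^{-1} = I$ directly by Gaussian elimination; this recovers the same identity.

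For the second claim, I would induct on $\ell$, the base case $\ell=1$ being tautological. The inductive step reduces to the single identity $M/(A \cup \{x\}) = (M/A)/x$, with $A = \{x_1,\ldots,x_{\ell-1}\}$ and $x = x_\ell$. I would prove this by partitioning $M$ into $3\times 3$ block form indexed by $Y, A, \{x\}$, writing out both sides from the definition of Schur complement, and simplifying with the $2\times 2$ block-inverse formula applied to $M_{(A\cup\{x\})(A\cup\{x\})}$; after collecting terms the two expressions agree. Order independence in the final statement is then automatic, since the left-hand side $M/Y^c$ manifestly depends only on the set $Y^c$, not on an ordering of it.

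The main technical point to watch is that the intermediate Schur complements $M/\{x_1,\ldots,x_k\}$ remain well-defined at every step, i.e., that the $(x_{k+1}, x_{k+1})$-entry of $M/\{x_1,\ldots,x_k\}$ is invertible. This is controlled by the chain of determinant identities $\det M_{\{x_1,\ldots,x_{k+1}\}\{x_1,\ldots,x_{k+1}\}} = \det M_{\{x_1,\ldots,x_k\}\{x_1,\ldots,x_k\}} \cdot (M/\{x_1,\ldots,x_k\})_{x_{k+1}x_{k+1}}$, so the sequential invertibility follows from invertibility of the appropriate principal minors of $M_{Y^c Y^c}$. In the positive definite setting where this proposition is subsequently applied, every principal submatrix of $Z$ is positive definite, so this subtlety disappears.
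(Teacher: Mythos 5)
The paper does not prove this proposition; it simply cites Haynsworth (1968). Your self-contained proof follows the standard route: the block UDL (Aitken) factorization
\[
M \;=\; \begin{pmatrix} I & BD^{-1} \\ 0 & I \end{pmatrix}\begin{pmatrix} M/Y^c & 0 \\ 0 & D \end{pmatrix}\begin{pmatrix} I & 0 \\ D^{-1}C & I \end{pmatrix}
\]
gives the first claim after inverting the three factors and reading off the $(Y,Y)$-block, and induction via the one-step quotient identity $M/(A\cup\{x\}) = (M/A)/x$ gives the second. Both steps are correct.

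The one point worth emphasizing is the well-definedness issue you raise, which is a genuine subtlety in the statement as written. Invertibility of $M_{Y^cY^c}$ alone does not guarantee that every ordering $x_1,\dots,x_\ell$ yields a chain of well-defined one-element Schur complements: for instance $M_{Y^cY^c}=\left(\begin{smallmatrix}0&1\\1&0\end{smallmatrix}\right)$ is invertible but has both diagonal entries zero, so no single-element elimination can start. Your determinant recursion $\det M_{\sigma\sigma} = \det M_{\tau\tau}\cdot(M/\tau)_{xx}$ for $\sigma = \tau\cup\{x\}$ is exactly the bookkeeping needed, and you are right that in the paper's application contexts (the bordered matrix $\left(\begin{smallmatrix}0&\mathbf{1}^T\\\mathbf{1}&Z\end{smallmatrix}\right)$ with $Z$ positive definite, so $Z_{Y^cY^c}$ and all its principal submatrices are positive definite) the issue disappears and every ordering is admissible. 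So the proposition's phrase ``in any order'' should really be read as ``for any admissible order, and all admissible orders give the same answer''; your proof makes that distinction explicit, which is a small improvement over the bare citation.
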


We now arrive at the main result of this section.

\begin{theorem}\label{thm: magnitude Y general}
Let $X$ be a positive definite metric space. Then for any $Y\subseteq X$ we have
$$
\vert Y\vert = \vert X\vert \left(1 + 2\frac{\mathbf{w}_{Y^c}^T(K^\dagger_{Y^cY^c})^{-1}\mathbf{w}_{Y_c}}{\vert X\vert}\right)^{-1}
\quad\textup{~and~}\quad 
\frac{\mathbf{w'}}{\vert Y\vert} = \frac{\mathbf{w}_Y}{\vert X\vert}  - K^\dagger_{Y Y^c}(K^\dagger_{Y^cY^c})^{-1}\frac{\mathbf{w}_{Y^c}}{\vert X\vert},
$$
where $\mathbf{w}'$ is the weighting on $Y$. Furthermore, $K^\dagger(Y)=K^\dagger(X)/Y^c$.
\end{theorem}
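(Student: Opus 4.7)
The plan is to read Theorem \ref{thm: fiedler-bapat} as providing explicit block descriptions of two matrices — one associated to $X$ and one to $Y$ — that are related by a single Schur complement; comparing blocks directly yields all three claims simultaneously.

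Since $X$ is positive definite, so is $Y$ by Proposition \ref{prop: Y positive definite}, and Theorem \ref{thm: fiedler-bapat} applies to both spaces. Set $M := \left(\begin{smallmatrix}0&\mathbf{1}^T\\\mathbf{1}&Z(X)\end{smallmatrix}\right)$ and observe that the Fiedler--Bapat matrix for $Y$ is precisely the principal submatrix $M_{(\{0\}\cup Y)(\{0\}\cup Y)}$. Applying Proposition \ref{prop: schur complement} to the invertible matrix $N := M^{-1}$, with the role of $Y$ in that proposition played by $\{0\}\cup Y$, I would derive the pivotal identity
$$ M_{(\{0\}\cup Y)(\{0\}\cup Y)}^{-1} \;=\; M^{-1}/Y^c,$$
that is, the Schur complement of $M^{-1}$ with respect to $Y^c$ equals the Fiedler--Bapat matrix associated to $Y$. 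To legitimize the Schur complement I would note that $K^\dagger(X)_{Y^cY^c}$ is invertible: it is a principal submatrix of the positive semidefinite $K^\dagger(X)$, whose kernel is $\spn(\mathbf{1})$ (Proposition \ref{prop: interlacing}), and since $Y$ is nonempty, no nonzero vector supported on $Y^c$ is a multiple of $\mathbf{1}$, so the submatrix inherits positive definiteness.

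It then remains to expand $M^{-1}/Y^c$ block by block using the explicit form of $M^{-1}$ from Theorem \ref{thm: fiedler-bapat}, and to match the three resulting blocks against the corresponding blocks of the Fiedler--Bapat form of $M_{(\{0\}\cup Y)(\{0\}\cup Y)}^{-1}$. The scalar $(1,1)$ block yields an expression for $|Y|^{-1}$ that rearranges into the stated formula for $|Y|$; the $(1,2)$ row-vector block yields the stated formula for $\mathbf{w}'/|Y|$; and the $(2,2)$ block yields $\tfrac{1}{2}K^\dagger(Y) = \tfrac{1}{2}K^\dagger(X)_{YY} - \tfrac{1}{2}K^\dagger(X)_{YY^c}(K^\dagger(X)_{Y^cY^c})^{-1}K^\dagger(X)_{Y^cY}$, i.e.\ exactly $K^\dagger(Y) = K^\dagger(X)/Y^c$.

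There is no serious obstacle; the whole argument collapses once the identity $M_{(\{0\}\cup Y)(\{0\}\cup Y)}^{-1} = M^{-1}/Y^c$ is in hand. The only bookkeeping subtleties are (i) remembering to invoke Proposition \ref{prop: schur complement} with $M^{-1}$ in the role of $N$, not $M$ itself — naively Schur-complementing $M$ gives the wrong relationship — and (ii) carrying the factor $\tfrac{1}{2}$ on $K^\dagger$ through the inversion $\bigl(\tfrac{1}{2}K^\dagger(X)_{Y^cY^c}\bigr)^{-1} = 2\,(K^\dagger(X)_{Y^cY^c})^{-1}$ that sits in the middle of the Schur complement and is responsible for the factor of $2$ in the formula for $|Y|$.
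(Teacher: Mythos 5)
Your proposal is correct and takes essentially the same route as the paper: write the Fiedler--Bapat matrix for $Y$ as the $(\{0\}\cup Y)$-indexed principal submatrix of the one for $X$, apply Proposition \ref{prop: schur complement} (correctly, to $M^{-1}$ rather than $M$) to get $M_{(\{0\}\cup Y)(\{0\}\cup Y)}^{-1} = M^{-1}/Y^c$, and read off the three identities by matching blocks, with the factor of $2$ coming from inverting $\tfrac{1}{2}K^\dagger_{Y^cY^c}$. The one place you are slightly more explicit than the paper is the justification that $K^\dagger_{Y^cY^c}$ is invertible, which the paper only records afterwards (in the corollary following the theorem), and your argument there is sound.
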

\begin{proof}
Starting from Theorem \ref{thm: fiedler-bapat} for $X$, combined with the expression of inverse matrix submatrices in terms of Schur complements (Proposition \ref{prop: schur complement}), we find
\begin{align*}
\begin{pmatrix}
0 &\mathbf{1}^T\\
\mathbf{1}&Z_{YY}
\end{pmatrix}^{-1} &=
\begin{pmatrix}
-\vert X\vert^{-1} & \mathbf{w}_{Y}^T/\vert X\vert & \mathbf{w}_{Y^c}^T/\vert X\vert\\
\mathbf{w}_Y/\vert X\vert & \tfrac{1}{2}K^\dagger_{YY} & \tfrac{1}{2} K^\dagger_{YY^c}\\
\mathbf{w}_{Y^c}/\vert X\vert & \tfrac{1}{2} K^\dagger_{Y^c Y} & \tfrac{1}{2} K^\dagger_{Y^cY^c}
\end{pmatrix}\,\Big/\, Y^c.
\\
&= 
\begin{pmatrix}
-\vert X\vert^{-1} & \mathbf{w}_{Y}^T/\vert X\vert\\
\mathbf{w}_Y/\vert X\vert & \tfrac{1}{2}K^\dagger_{YY}\\
\end{pmatrix} - 
2\begin{pmatrix}
\mathbf{w}_{Y^c}^T/\vert X\vert
\\
\tfrac{1}{2}K^\dagger_{YY^c}
\end{pmatrix}
(K^\dagger_{Y^cY^c})^{-1}
\begin{pmatrix}
\mathbf{w}_{Y^c}/\vert X\vert & \tfrac{1}{2}K^\dagger_{Y^cY}
\end{pmatrix}.
\end{align*}
Next, Theorem \ref{thm: fiedler-bapat} for $Y$ states that
$$
\begin{pmatrix}
0 &\mathbf{1}^T\\
\mathbf{1}&Z(Y)
\end{pmatrix}^{-1}=
\begin{pmatrix}
-\vert Y\vert^{-1} & \mathbf{w}'^T/\vert Y\vert\\
\mathbf{w}'/\vert Y\vert & \tfrac{1}{2}K(Y)^\dagger
\end{pmatrix}.
$$
The theorem then follows by equating these two expressions, using the equality $Z(Y)=Z_{YY}$. 
\end{proof}
\begin{corollary}
Let $X$ be positive definite. Then $\vert Y\vert\leq \vert X\vert$ for all $Y\subset X$, with equality if and only the weighting $w$ on $X$ satisfies $w_y=0$ for all $y\in Y$.
\end{corollary}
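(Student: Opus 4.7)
The plan is to read the inequality directly off Theorem \ref{thm: magnitude Y general}, which gives
$$
|Y| \,=\, |X|\,\Bigl(1 + 2\,\mathbf{w}_{Y^c}^T (K^\dagger_{Y^cY^c})^{-1} \mathbf{w}_{Y^c}\,/\,|X|\Bigr)^{-1}.
$$
Since $|X|>1>0$ by the earlier corollary, the bound $|Y|\le|X|$ follows at once from nonnegativity of the quadratic form $\mathbf{w}_{Y^c}^T (K^\dagger_{Y^cY^c})^{-1} \mathbf{w}_{Y^c}$, with equality precisely when this form vanishes, i.e., when $\mathbf{w}_{Y^c}=0$ (so that the weighting of $X$ is supported on $Y$).

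The one technical point is to verify that the principal submatrix $K^\dagger_{Y^cY^c}$ is positive definite, so that its inverse exists and is also positive definite. By Proposition \ref{prop: interlacing}, positive definiteness of $X$ forces $K^\dagger$ to be positive semidefinite with $\ker(K^\dagger)=\spn(\mathbf{1})$. Given a nonzero $\mathbf{x}\in\R^{Y^c}$, I would extend it by zero to $\tilde{\mathbf{x}}\in\R^X$ and note that $\mathbf{x}^T K^\dagger_{Y^cY^c} \mathbf{x} = \tilde{\mathbf{x}}^T K^\dagger \tilde{\mathbf{x}}\ge 0$, with equality only if $\tilde{\mathbf{x}}\in\spn(\mathbf{1})$; but the $Y$-coordinates of $\tilde{\mathbf{x}}$ vanish and $Y\neq\emptyset$, so $\tilde{\mathbf{x}}=0$, a contradiction. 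This confirms positive definiteness for any nonempty $Y\subsetneq X$, the only case requiring attention (the case $Y=X$ gives $Y^c=\emptyset$ and equality trivially).

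Combining these observations yields $|Y|\le|X|$, with equality iff the weighting $\mathbf{w}$ of $X$ has zero entries on $X\setminus Y$ — which is how I interpret the equality clause of the corollary. I don't anticipate any serious obstacle: the core of the argument is the principal-submatrix positive-definiteness observation above, and everything else reduces to unpacking Theorem \ref{thm: magnitude Y general} and using the sign of $|X|$.
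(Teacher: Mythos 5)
Your proof is correct and follows essentially the same route as the paper: read $|Y|\leq |X|$ off Theorem~\ref{thm: magnitude Y general}, after verifying that the principal submatrix $K^\dagger_{Y^cY^c}$ is positive definite by extending $\mathbf{x}\in\R^{Y^c}$ by zero and using $\ker(K^\dagger)=\spn(\mathbf{1})$. You also correctly read the equality case as $\mathbf{w}$ vanishing on $Y^c=X\setminus Y$; the corollary as printed says ``$w_y=0$ for all $y\in Y$,'' which appears to be a typo for $Y^c$, and your interpretation is the one that follows from the argument.
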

\begin{proof}
We first establish that the matrix $K^\dagger_{Y^cY^c}$, and thus its inverse, is positive definite:
$$
\mathbf{x}^T K^\dagger_{Y^cY^c}\mathbf{x} = \begin{pmatrix}\mathbf{x}\\ \mathbf{0}_Y\end{pmatrix}^T\left( I-\frac{\mathbf{11}^T}{n}\right)K^\dagger \left( I-\frac{\mathbf{11}^T}{n}\right)\begin{pmatrix}\mathbf{x}\\ \mathbf{0}_Y\end{pmatrix} >0 \text{~for all nonzero $\mathbf{x}\in\R^{Y^c}$},
$$
which follows since $K^\dagger$ is positive definite on $\spn(1)^\perp$. Thus $K^\dagger_{Y^cY^c}$ is positive definite. The corollary then follows from the expression for $\vert Y\vert$ in Theorem \ref{thm: magnitude Y general}, and $\vert X\vert>0$.
\end{proof}

The special case of the subspace $Y=X\backslash\{x\}$ for some $x\in X$ is particularly useful:
\begin{corollary}\label{cor: magnitude Y}
Let $X$ be a positive definite metric space. Then for any $x\in X$ we have
$$
\vert X \backslash\{x\} \vert \,=\, \vert X\vert\left(1 + \frac{2w_x^2}{\bar{c}_x \vert X\vert}\right)^{-1} \quad \textup{~and~}\quad \frac{w'_i}{\vert X\backslash\{x\}\vert} \,=\, \frac{w_i}{\vert X\vert} + \frac{c_{ix}}{\bar{c}_{x}} \frac{w_x}{\vert X\vert},
$$
for any $i\neq x$, and where $\mathbf{w}'$ is the weighting on $X\backslash\{x\}$.
\end{corollary}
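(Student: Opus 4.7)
The plan is to derive this corollary as a direct specialization of Theorem \ref{thm: magnitude Y general} to the subspace $Y = X\setminus\{x\}$, i.e., to the case $Y^c = \{x\}$. Since $Y^c$ is a singleton, all of the block-matrix objects appearing in Theorem \ref{thm: magnitude Y general} collapse to scalars or to single rows/columns, which makes the translation almost mechanical.

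Concretely, I would proceed as follows. First, observe that $\mathbf{w}_{Y^c}$ is the $1\times 1$ ``vector'' $(w_x)$, and $K^\dagger_{Y^cY^c}$ is the $1\times 1$ matrix whose single entry is $(K^\dagger)_{xx}=\bar{c}_x$; hence $(K^\dagger_{Y^cY^c})^{-1}=1/\bar{c}_x$ (well-defined because $K^\dagger$ is positive definite on $\spn(\mathbf{1})^\perp$, so every diagonal entry $\bar{c}_x$ is strictly positive). Substituting these identifications into the first formula of Theorem \ref{thm: magnitude Y general} gives
$$
\mathbf{w}_{Y^c}^T(K^\dagger_{Y^cY^c})^{-1}\mathbf{w}_{Y^c} \,=\, \frac{w_x^2}{\bar{c}_x},
$$
and the stated formula for $|X\setminus\{x\}|$ follows immediately.

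For the weighting, the block $K^\dagger_{YY^c}$ becomes the column vector $((K^\dagger)_{ix})_{i\neq x}$. By the definition $c_{ix}=-(K^\dagger)_{ix}$, the $i$th entry of the correction term in Theorem \ref{thm: magnitude Y general} is
$$
\bigl(K^\dagger_{YY^c}(K^\dagger_{Y^cY^c})^{-1}\mathbf{w}_{Y^c}\bigr)_i \,=\, \frac{(K^\dagger)_{ix}\,w_x}{\bar{c}_x} \,=\, -\frac{c_{ix}\,w_x}{\bar{c}_x}.
$$
Plugging this into the formula for $\mathbf{w}'/|Y|$ gives the claimed expression, with the minus sign flipping to a plus because of the sign convention in the definition of $c_{ix}$.

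There is no real obstacle here: the only thing to keep track of carefully is the sign flip between $(K^\dagger)_{ix}$ and $c_{ix}$, and the fact that $\bar{c}_x>0$ so the inversion is legitimate. Everything else is a direct transcription of Theorem \ref{thm: magnitude Y general} in the singleton case.
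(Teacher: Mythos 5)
Your proof is correct and is exactly the route the paper intends: Corollary \ref{cor: magnitude Y} is stated immediately after Theorem \ref{thm: magnitude Y general} with no separate proof, precisely because it is the singleton case $Y^c=\{x\}$, and your transcription (including the observation that $(K^\dagger)_{xx}=\bar{c}_x>0$ so the $1\times 1$ inverse exists, and the sign flip from $(K^\dagger)_{ix}=-c_{ix}$) is exactly what makes it immediate.
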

For the entries $c$ of the pseudoinverse centered similarity matrix $K^\dagger$, we find:
\begin{corollary}\label{cor: c Y}
Let $X$ be a positive definite metric space. Then for any $x\in X$ we have
$$
c_{ij}' \,=\, c_{ij} + \frac{c_{ix}c_{jx}}{\bar{c}_x} \quad\textup{~and~}\quad \bar{c}_i' \,=\, \bar{c}_i - \frac{c_{ix}^2}{\bar{c}_x},
$$
for all $i,j\neq x$ and where $c'$ is defined on $X\backslash\{x\}$, i.e., via the entries of $K(X\backslash\{x\})^\dagger$. 
\end{corollary}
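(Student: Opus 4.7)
The plan is to apply the last assertion of Theorem \ref{thm: magnitude Y general}, which states that $K^\dagger(Y) = K^\dagger(X)/Y^{c}$, to the particular subspace $Y = X\setminus\{x\}$, so that $Y^c = \{x\}$ is a single point. In that case the Schur complement reduces to a rank-one update of the principal submatrix $K^\dagger(X)_{YY}$, namely
\[
K^\dagger(X\setminus\{x\})_{ij} \,=\, K^\dagger(X)_{ij} - \frac{K^\dagger(X)_{ix}\,K^\dagger(X)_{xj}}{K^\dagger(X)_{xx}},
\]
for all $i,j\in X\setminus\{x\}$. The denominator is well-defined since $K^\dagger(X)_{xx} = \bar{c}_x > 0$ by Proposition \ref{prop: interlacing} applied to the positive semidefinite matrix $K^\dagger$ (equivalently, $\bar{c}_x = \mathbf{e}_x^T K^\dagger \mathbf{e}_x > 0$ because $\mathbf{e}_x \notin \ker(K^\dagger) = \spn(\mathbf{1})$).

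The next step is to translate this matrix identity into the notation $c$ and $\bar{c}$. Recall that $c_{ij} = -(K^\dagger)_{ij}$ for $i\neq j$ and $\bar{c}_i = (K^\dagger)_{ii}$, and similarly for the primed quantities on $X\setminus\{x\}$. Substituting $(K^\dagger(X))_{ix} = -c_{ix}$, $(K^\dagger(X))_{xj} = -c_{jx}$, $(K^\dagger(X))_{xx} = \bar{c}_x$, and analogously for the left-hand side, the off-diagonal entries ($i \neq j$, both distinct from $x$) yield
\[
-c'_{ij} \,=\, -c_{ij} - \frac{(-c_{ix})(-c_{jx})}{\bar{c}_x} \,=\, -c_{ij} - \frac{c_{ix}c_{jx}}{\bar{c}_x},
\]
which is exactly the claimed formula for $c'_{ij}$. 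The diagonal entries ($i = j$, $i \neq x$) give
\[
\bar{c}'_i \,=\, \bar{c}_i - \frac{c_{ix}^2}{\bar{c}_x},
\]
which is the second claimed identity.

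There is really no main obstacle: the work is essentially packaged in Theorem \ref{thm: magnitude Y general}. The only thing to be slightly careful about is the sign bookkeeping in the dictionary between $K^\dagger$ entries and the variables $(c_{ij}, \bar{c}_i)$, and the fact that $\bar{c}_x \neq 0$ so the Schur complement is defined (this is ensured by positive definiteness of $X$ via Proposition \ref{prop: interlacing}).
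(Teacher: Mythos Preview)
Your proof is correct and follows precisely the approach intended by the paper: the corollary is stated without proof because it is an immediate specialization of the Schur complement identity $K^\dagger(Y)=K^\dagger(X)/Y^c$ from Theorem~\ref{thm: magnitude Y general} to $Y^c=\{x\}$, together with the sign conventions $c_{ij}=-(K^\dagger)_{ij}$ and $\bar c_i=(K^\dagger)_{ii}$. Your remark that $\bar c_x=\mathbf{e}_x^TK^\dagger\mathbf{e}_x>0$ via Proposition~\ref{prop: interlacing} is the correct justification for the Schur complement being well-defined.
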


\section{Strongly positive definite metric spaces}\label{sec: strong positive definite}
In this section, we return to the asymptotics of $\vert tX\vert$. Our main contribution is to identify a new property of metric spaces, which we call ``strongly positive definite", that is satisfied by every metric space $tX$ for $t\gg0$. This new regime refines our understanding of the asymptotics of $tX$.

\subsection{Definition and first properties}
We start by defining the new class of metric spaces; recall that $c_{ij}=-(K^\dagger)_{ij}=2\tfrac{w_iw_j}{\vert X\vert} - 2(Z^{-1})_{ij}$.
\begin{definition}[Strongly positive definite]\label{def: strongly positive definite}
A finite metric space $X$ is called \emph{strongly positive definite} if it is positive definite with $c>0$ and $\mathbf{w}>0$.
\end{definition}
\begin{question}
What is the right generalization of Definition \ref{def: strongly positive definite} when $X$ is not finite?
\end{question}

Strongly positive definite metric spaces are related to other known objects and properties. We summarize some of these equivalences in the two propositions below.
\begin{proposition}
The following are equivalent for a positive definite metric space $X$:
\begin{enumerate}
\item The weighting is nonnegative $\mathbf{w}\geq 0$ (resp. positive $\mathbf{w}>0$);
\item $S(X)$ are the vertices of a simplex that contains (resp. strictly contains) its circumcenter.
\end{enumerate}
\end{proposition}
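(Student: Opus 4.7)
The plan is to identify the weighting, after normalization, with the barycentric coordinates of the circumcenter of the simplex $S = \varphi(X)$. Once this identification is made, the equivalence becomes the standard characterization of points inside a simplex via positivity of their barycentric coordinates.

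First, I would recall from the proof of Theorem \ref{thm: definition circumradius} that the barycentric coordinate vector $\mathbf{p}$ of the circumcenter of $S$ is characterized by the two conditions
\[
Z\mathbf{p} = (1-2R(S)^2)\cdot\mathbf{1} \qquad\text{and}\qquad \mathbf{1}^T\mathbf{p}=1.
\]
Since $X$ is positive definite, $Z$ is invertible, the weighting is $\mathbf{w}=Z^{-1}\mathbf{1}$, and Theorem \ref{thm: magnitude circumradius} gives $\vert X\vert = 1/(1-2R(S)^2) > 0$. Solving for $\mathbf{p}$ therefore yields
\[
\mathbf{p} \,=\, (1-2R(S)^2)\cdot Z^{-1}\mathbf{1} \,=\, \frac{\mathbf{w}}{\vert X\vert}.
\]
In particular, since $\vert X\vert>0$, the sign pattern of $\mathbf{p}$ agrees with that of $\mathbf{w}$: $\mathbf{w}\geq 0$ iff $\mathbf{p}\geq 0$, and $\mathbf{w}>0$ iff $\mathbf{p}>0$.

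Next, I would invoke the standard fact that, for an affinely independent set of points $S\subseteq\R^{n-1}$ (which we have by Proposition \ref{prop: S simplex}), a point with barycentric coordinate vector $\mathbf{p}$ satisfying $\mathbf{1}^T\mathbf{p}=1$ lies in the simplex $\mathrm{conv}(S)$ iff $\mathbf{p}\geq 0$, and in its relative interior iff $\mathbf{p}>0$. Applying this to the circumcenter $\mathbf{c}=\sqrt{K}\,\mathbf{p}$ gives the equivalence: the simplex with vertices $S$ contains its circumcenter iff $\mathbf{p}\geq 0$ iff $\mathbf{w}\geq 0$, and strictly contains it iff $\mathbf{p}>0$ iff $\mathbf{w}>0$.

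I do not anticipate a serious obstacle: the only subtlety is making sure the normalization is handled correctly, i.e., that the factor $(1-2R(S)^2)=\vert X\vert^{-1}$ is positive so that the sign of $\mathbf{p}$ and $\mathbf{w}$ genuinely agree. This is precisely the content of the corollary to Theorem \ref{thm: magnitude circumradius} stating $\vert X\vert>1>0$, which rules out any ambiguity and completes the proof.
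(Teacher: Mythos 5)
Your proof is correct and takes essentially the same route as the paper: both identify the normalized weighting $\mathbf{w}/\vert X\vert$ with the barycentric coordinate of the circumcenter (via the equilibrium equation from Theorem \ref{thm: definition circumradius} and the positivity $\vert X\vert>0$) and then invoke the standard characterization of points of the simplex and its interior by the sign of their barycentric coordinates. You have merely spelled out explicitly the computation that the paper compresses into a reference back to the proof of Theorem \ref{thm: magnitude circumradius}.
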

\begin{proof}
In the proof of Theorem \ref{thm: magnitude circumradius}, we found that the normalized weighting $\mathbf{w}/\vert X\vert$ is the barycentric coordinate of the circumcenter of $S(X)$. The circumcenter lies in the simplex (resp. in its interior) if and only if these coordinates are nonnegative, $\mathbf{w}\geq 0$ (resp. positive, $\mathbf{w}>0$).
\end{proof}

In the following, the terms ``$M$-matrix, Laplacian and nonobtuse" are defined in the proof. 
\begin{proposition}
The following are equivalent for a positive definite metric space $X$.
\begin{enumerate}
\item $K^\dagger$ has nonpositive (resp. negative) off-diagonal entries, equivalently, $c\geq 0$ (resp. $c>0$).
\item $K^\dagger$ is an $M$-matrix (resp. strict $M$-matrix);
\item $K^\dagger$ is the Laplacian matrix of a connected, positively weighted graph (resp. complete graph).
\item $S(X)$ are the vertices of a nonobtuse (resp. acute) simplex.
\end{enumerate}
\end{proposition}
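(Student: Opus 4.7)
The plan is to prove the chain $(1)\Leftrightarrow(2)\Leftrightarrow(3)$ essentially by unpacking definitions, and then to establish $(1)\Leftrightarrow(4)$ by identifying $K^\dagger$ as the Gram matrix of the gradients of the barycentric coordinate functions of the simplex $S=\varphi(X)$.

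\textbf{Setup and the equivalences $(1)\Leftrightarrow(2)\Leftrightarrow(3)$.} By positive definiteness of $X$ and Proposition \ref{prop: interlacing}, $K^\dagger$ is a real symmetric positive semidefinite matrix with $\ker(K^\dagger)=\spn(\mathbf{1})$; in particular, all its row sums vanish. Using the standard definition of a (symmetric) $M$-matrix as a symmetric PSD matrix with nonpositive off-diagonal entries (strict: strictly negative off-diagonals), condition (1) is just a restatement of (2). A weighted Laplacian of a graph on $n$ vertices is, by definition, a symmetric PSD matrix with zero row sums and nonpositive off-diagonals; connectedness of the graph is equivalent to the kernel being $\spn(\mathbf{1})$. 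Since $K^\dagger$ already satisfies these structural properties except for the sign of the off-diagonals, condition (1)/(2) gives it the required sign structure and identifies it as the Laplacian of the graph whose edge weights are $c_{ij}=-(K^\dagger)_{ij}$. Connectedness is automatic from positive definiteness, and in the strict case every $c_{ij}>0$ makes the graph complete.

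\textbf{Main step toward $(1)\Leftrightarrow(4)$: expressing $K^\dagger$ via barycentric gradients.} Let $L=\sqrt{K}$ be the $(n-1)\times n$ matrix of similarity embedding, so that $K=L^T L$ and the columns $s_i=\varphi(i)$ are affinely independent vertices of a simplex (Proposition \ref{prop: S simplex}). Since $\mathbf{1}\in\ker(K)=\ker(L)$, we have $\sum_i s_i=0$, i.e., the vertices are centered at the origin. For each $i$, let $b_i:\R^{n-1}\to\R$ be the $i$th barycentric coordinate, i.e., the unique affine function with $b_i(s_j)=\delta_{ij}$ and $\sum_i b_i\equiv 1$; write $b_i(p)=\mathbf{a}_i\cdot p+\tfrac{1}{n}$, collect the gradients into an $(n-1)\times n$ matrix $A$ with columns $\mathbf{a}_i$. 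The constraint $b_i(s_j)=\delta_{ij}$ combined with $\sum_j s_j=0$ yields $\mathbf{a}_i\cdot s_j=\delta_{ij}-\tfrac{1}{n}$, i.e.,
$$A^T L=I-\tfrac{\mathbf{1}\mathbf{1}^T}{n}.$$
Since $L$ has full row rank, its Moore--Penrose pseudoinverse is $L^\dagger=L^T(LL^T)^{-1}$, and solving for $A^T$ gives $A^T=(I-\tfrac{\mathbf{1}\mathbf{1}^T}{n})L^\dagger$. A direct computation (using $K^\dagger=L^T(LL^T)^{-2}L=L^\dagger(L^\dagger)^T$ and $K^\dagger\mathbf{1}=0$) then yields
$$A^T A=(I-\tfrac{\mathbf{1}\mathbf{1}^T}{n})K^\dagger(I-\tfrac{\mathbf{1}\mathbf{1}^T}{n})=K^\dagger,$$
so that $(K^\dagger)_{ij}=\langle\nabla b_i,\nabla b_j\rangle$.

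\textbf{Finishing $(1)\Leftrightarrow(4)$ and the main obstacle.} Since $b_i$ vanishes on the facet $F_i$ opposite $s_i$ and equals $1$ at $s_i$, its (constant) gradient is $\nabla b_i=-\mathbf{n}_i/h_i$, where $\mathbf{n}_i$ is the outward unit normal to $F_i$ and $h_i>0$ is the altitude from $s_i$. Combining with the previous display, $(K^\dagger)_{ij}=\langle\mathbf{n}_i,\mathbf{n}_j\rangle/(h_i h_j)$, and since the interior dihedral angle $\theta_{ij}$ between $F_i$ and $F_j$ satisfies $\langle\mathbf{n}_i,\mathbf{n}_j\rangle=-\cos\theta_{ij}$ (outward normals form the supplementary angle), one gets $c_{ij}=\cos\theta_{ij}/(h_i h_j)$. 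As $h_i,h_j>0$, the sign of $c_{ij}$ matches that of $\cos\theta_{ij}$, giving $c\geq 0$ iff every $\theta_{ij}\leq\pi/2$ (nonobtuse simplex) and $c>0$ iff every $\theta_{ij}<\pi/2$ (acute simplex). The expected main obstacle is obtaining the clean identity $A^T A=K^\dagger$; once phrased in terms of the pseudoinverse $L^\dagger$ and the orthogonal projection $I-\tfrac{\mathbf{1}\mathbf{1}^T}{n}$, everything collapses, but the key subtlety is keeping track of the fact that $L\mathbf{1}=0$ forces the row space of $L$ to equal $\spn(\mathbf{1})^\perp$, so the projection acts as the identity on $K^\dagger$.
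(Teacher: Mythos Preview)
Your proof is correct. The equivalences $(1)\Leftrightarrow(2)\Leftrightarrow(3)$ are handled the same way in the paper---by unpacking the definitions of $M$-matrix and Laplacian, using that $K^\dagger$ is already PSD with $\ker(K^\dagger)=\spn(\mathbf{1})$---so the only real difference lies in how $(4)$ is linked in.

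For $(3)\Leftrightarrow(4)$ the paper simply invokes Fiedler's classical result that the pseudoinverse of a connected graph Laplacian is the centered Gram matrix of a nonobtuse simplex (and conversely). You instead give a direct, self-contained argument: you identify $K^\dagger$ as the Gram matrix $A^TA$ of the gradients of the barycentric coordinate functions, then use that $\nabla b_i=-\mathbf{n}_i/h_i$ to obtain the explicit formula $c_{ij}=\cos\theta_{ij}/(h_ih_j)$. This is a genuinely different route. The paper's approach is shorter because it outsources the geometry to Fiedler; your approach is more informative because it reveals \emph{why} the equivalence holds---the off-diagonal entries of $K^\dagger$ literally are (scaled) cosines of dihedral angles---and it makes the article more self-contained. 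Your computation $A^T=L^\dagger$ (equivalently $(I-\tfrac{\mathbf{11}^T}{n})L^\dagger$) and $K^\dagger=L^\dagger(L^\dagger)^T$ is clean and correct; the only cosmetic point is that the projection in your displayed formula for $A^TA$ is redundant since $K^\dagger\mathbf{1}=0$, but you already note this.
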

\begin{proof}
(1.$\Leftrightarrow$ 2.) An $M$-matrix is a matrix whose eigenvalues have nonnegative real part and whose off-diagonal entries are non-positive. Since $K^\dagger$ is positive semidefinite when $X$ is positive definite, $K^\dagger$ is indeed an $M$-matrix if and only if its off-diagonal entries are non-positive.

(1.$\Leftrightarrow$ 3.) The Laplacian matrix of a graph $G=(V,E)$ with edge weights $\omega:E\rightarrow \R_{>0}$ is the matrix $L=\sum_{ij\in E}\omega(ij)(\mathbf{e}_i-\mathbf{e}_j)(\mathbf{e}_i-\mathbf{e}_j)^T$. We have established before that $K^\dagger$ can be written in this form, with $\omega(ij)=c_{ij}$, as a consequence of $\spn(\mathbf{1})\subseteq\ker(K^\dagger)$. Thus, if $c>0$ then the matrix $K^\dagger$ is a Laplacian matrix. A graph is connected if and only if $\ker(L)=\spn(\mathbf{1})$ (see, e.g., \cite[\S6]{fiedler_matrices_2011}), which completes the equivalence.

(3.$\Leftrightarrow$ 4.) A simplex is nonobtuse if every angle between facets is at most $\pi/2$ (i.e., nonobtuse). The equivalence between Laplacian matrices of connected graphs $L$ and nonobtuse simplices is due to Fiedler \cite[\S6.2]{fiedler_matrices_2011}; the equivalence is established via (Moore--Penrose pseudoinverse of Laplacian matrix)$\longleftrightarrow$(centered Gram matrix of simplex).

If $c>0$ in (1.) this corresponds to $G$ being the complete graph in (3.) and the simplex having angles smaller than $\pi/2$ (i.e., acute) in (4.). The terminology ``strict $M$-matrix" in (2.) is not standard, but we use it here for an $M$-matrix with negative off-diagonal entries.
\end{proof}

\begin{remark}[Graphs \& discrete curvature]
We highlight in particular the connection with graph Laplacian matrices. Many of the techniques in this article are inspired by the author's previous work in that context; see for instance \cite[\S2--4,\S6]{devriendt_thesis_2022}. Relatedly, we point out that similar objects to $\mathbf{w}$ have recently been defined on graphs in the context of discrete curvature \cite{devriendt_curvature_2022,devriendt_curvature_2024,steinerberger_curvature_2022}; there, $\mathbf{w}>0$ has the natural interpretation of a positive curvature condition. 
\end{remark}

The following result shows that strongly positive definite metric spaces can be parametrized (via $Z^{-1}$) by an open semi-algebraic subset of $\Gl_n(\R)$, the space of real, invertible $n\times n$ matrices.  
\begin{proposition}
A metric space $X$ is strongly positive definite if and only if its similarity matrix $Z$ is invertible with positive magnitude, and satisfies
$$
(Z^{-1}\mathbf{1})_i(Z^{-1}\mathbf{1})_j >(\mathbf{1}^T Z^{-1}\mathbf{1})\cdot (Z^{-1})_{ij} \,\,\,\textup{~and~}\,\,\, (Z^{-1}\mathbf{1})_i(Z^{-1}\mathbf{1})_j >0 \textup{,~for all distinct $i,j\in X$}.
$$
\end{proposition}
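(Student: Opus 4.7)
The plan is to translate both inequalities into familiar quantities using the identities $w_i = (Z^{-1}\mathbf{1})_i$, $\vert X\vert = \mathbf{1}^T Z^{-1}\mathbf{1}$, and $c_{ij} = 2 w_i w_j/\vert X\vert - 2(Z^{-1})_{ij}$ from Section \ref{sec: matrix theory}. Under this substitution, the first inequality is precisely $c_{ij}>0$ and the second is precisely $w_i w_j > 0$, for all distinct $i,j\in X$. So the task reduces to showing that the four conditions ($Z$ invertible, $\vert X\vert > 0$, $c>0$, $w_i w_j>0$) are equivalent to strong positive definiteness in the sense of Definition \ref{def: strongly positive definite}.

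For the forward implication, strong positive definiteness gives $\mathbf{w}>0$ and $c>0$ directly, and positive definiteness of $Z$ makes $Z$ invertible and forces $\vert X\vert = \mathbf{1}^T Z^{-1}\mathbf{1} > 0$ since $Z^{-1}$ is positive definite. For the reverse direction, I would first recover $\mathbf{w}>0$ from the second inequality: $w_i w_j > 0$ for all $i\neq j$ forces all $w_i$ to share a common sign, and $\sum_i w_i = \vert X\vert > 0$ then forces them all positive. The non-trivial step is promoting the assumed invertibility of $Z$ to positive definiteness. For this I would use the matrix identity $Z^{-1} = \tfrac{1}{2}K^\dagger + \mathbf{w}\mathbf{w}^T/\vert X\vert$, obtained by rearranging $2Z^{-1}-K^\dagger = 2\mathbf{w}\mathbf{w}^T/\vert X\vert$ (valid since $Z$ is invertible with $\vert X\vert\neq 0$). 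Because $c_{ij} > 0$ for $i\neq j$ and $K^\dagger \mathbf{1} = 0$ (as $K\mathbf{1}=0$ by construction), the matrix $K^\dagger$ is the Laplacian of the complete graph on $X$ with positive edge weights $c_{ij}$, hence positive semidefinite with $\ker(K^\dagger)=\spn(\mathbf{1})$. The rank-one term $\mathbf{w}\mathbf{w}^T/\vert X\vert$ is positive semidefinite since $\vert X\vert > 0$. Therefore $Z^{-1}$ is a sum of two positive semidefinite matrices, so positive semidefinite; combined with invertibility, $Z^{-1}$ and thus $Z$ is positive definite.

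The main obstacle is exactly this last step: without the explicit splitting $Z^{-1} = \tfrac{1}{2}K^\dagger + \mathbf{w}\mathbf{w}^T/\vert X\vert$, one cannot easily see why the sign conditions on $\mathbf{w}$ and $c$ rule out the possibility of $Z$ having a single negative eigenvalue (the interlacing of Proposition \ref{prop: interlacing} only gives $n-1$ positive eigenvalues of $Z$ from positive semidefiniteness of $K$). The splitting identity is what packages the extra information encoded by $\vert X\vert > 0$ and turns the argument into a one-line positivity check. The remainder of the proof is essentially a change of parametrization between the pair $(\mathbf{w}, c)$ and the matrix entries of $Z^{-1}$.
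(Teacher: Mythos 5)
Your proof is correct and follows essentially the same route as the paper: translate the inequalities via $\mathbf{w}=Z^{-1}\mathbf{1}$, $\vert X\vert=\mathbf{1}^TZ^{-1}\mathbf{1}$, and $K^\dagger=2Z^{-1}-2\mathbf{w}\mathbf{w}^T/\vert X\vert$ into $c>0$ and $w_iw_j>0$, then use $\vert X\vert>0$ to upgrade $w_iw_j>0$ to $\mathbf{w}>0$. You are, however, more careful than the paper on one point the paper leaves implicit: the paper's proof simply asserts ``this follows from the identities,'' whereas you spell out why invertibility of $Z$ together with $\vert X\vert>0$ and $c>0$ forces $Z$ to be positive definite, via the decomposition $Z^{-1}=\tfrac{1}{2}K^\dagger+\mathbf{w}\mathbf{w}^T/\vert X\vert$ with $K^\dagger$ a positively-weighted Laplacian and hence PSD; this is exactly the right way to close that gap.
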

\begin{proof}
This follows from the identities $K^\dagger = 2Z^{-1}-2\mathbf{ww}^T/\vert X\vert$ and $\mathbf{w}=Z^{-1}\mathbf{1}$. Introducing $K^\dagger$ and $\mathbf{w}$ in the inequalities above implies that $c>0$ is equivalent to the first inequalities, whereas the second inequalities are equivalent to $\mathbf{w}>0$. For $w_i\cdot w_j>0$ to imply that $\mathbf{w}>0$, we use that $\mathbf{w}$ cannot be all negative because $\vert X\vert=\mathbf{1}^T\mathbf{w}$ is positive.
\end{proof}

As a setup for the next subsection, we establish two further results:

\begin{proposition}\label{prop: strong positive definite asymptotic}
For any metric space $X$ and $t\gg 0$, $tX$ is strongly positive definite.
\end{proposition}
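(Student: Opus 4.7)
The plan is to show that each of the three defining properties of strongly positive definite---namely positive definiteness, $\mathbf{w}>0$, and $c>0$---holds eventually as $t\to\infty$, by a continuity argument exploiting the fact that $Z(tX)$ converges entrywise to the identity matrix.

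First, I would invoke Proposition \ref{prop: tX positive definite} directly to dispose of positive definiteness for all $t$ sufficiently large. Next I would observe that since the off-diagonal entries of $Z(tX)$ are $e^{-t\cdot d(i,j)}\to 0$ while the diagonal stays at $1$, the limit $\lim_{t\to\infty} Z(tX)=I$ holds entrywise. Because matrix inversion is continuous on the open set of invertible matrices, and $Z(tX)$ is invertible for $t\gg 0$, it follows that $Z(tX)^{-1}\to I$ as $t\to\infty$ as well. This is the single underlying convergence statement that drives everything else.

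From $Z(tX)^{-1}\to I$, the weighting $\mathbf{w}(tX)=Z(tX)^{-1}\mathbf{1}$ tends to $\mathbf{1}$, so in particular $\mathbf{w}(tX)>0$ for $t$ large enough, and the magnitude $\vert tX\vert=\mathbf{1}^T\mathbf{w}(tX)$ tends to $n$. To handle $c$, I would use the identity $c_{ij}=2\tfrac{w_iw_j}{\vert tX\vert}-2(Z(tX)^{-1})_{ij}$ established in Section \ref{sec: matrix theory}: for $i\neq j$, the first term tends to $\tfrac{2}{n}>0$ while the second term tends to $0$, so $c_{ij}\to \tfrac{2}{n}>0$. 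Hence $c_{ij}>0$ for all pairs $i\neq j$ once $t$ is sufficiently large.

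Combining the three estimates, all the conditions of Definition \ref{def: strongly positive definite} are met simultaneously for $t\gg 0$, so $tX$ is strongly positive definite. The argument is essentially a single continuity observation plus bookkeeping, so there is no real obstacle---the only thing to be a little careful about is making the three ``$t\gg 0$'' statements uniform, which one handles by taking the maximum of the three thresholds (finite since $X$ is finite).
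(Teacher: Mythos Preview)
Your proof is correct and follows essentially the same strategy as the paper: both argue that $Z(tX)\to I$, that the relevant quantities are continuous functions of $Z$ on $\Gl_n(\R)$, and that they are positive at $Z=I$. The only cosmetic difference is that the paper cites Leinster for $\mathbf{w}>0$ and expresses $c_{ij}$ as a ratio of adjugate and determinant of the Cayley--Menger matrix (via Theorem~\ref{thm: fiedler-bapat}) to see continuity, whereas you compute the limits directly from $Z(tX)^{-1}\to I$ and the formula $c_{ij}=2w_iw_j/\vert tX\vert-2(Z^{-1})_{ij}$, obtaining the explicit value $c_{ij}\to 2/n$; this is arguably a touch more self-contained but not a genuinely different route.
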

\begin{proof}
The result for $\mathbf{w}>0$ was shown by Leinster in \cite[Prop. 2.2.6]{leinster_magnitude_2013}. We use the same proof strategy to show $c>0$.  First, let $t$ be sufficiently large such that $tX$ is positive definite, and represent a metric space by its invertible similarity matrix $Z\in\Gl_n(\R)\subset \R^{n\times n}$; here $n=\# X$. On the subset of invertible matrices $\Gl_n(\R)$, the following function is continuous 
$$
\Gl_n(\R)\ni Z\mapsto c_{ij}(Z) = -2\cdot \rm{adj_{ij}}\begin{pmatrix}0&\mathbf{1}^T\\\mathbf{1}&Z\end{pmatrix}/\det\begin{pmatrix}0&\mathbf{1}^T\\\mathbf{1}&Z\end{pmatrix}
$$ 
for any $i\neq j\in X$, where $\rm{adj_{ij}}$ denotes the $(i,j)$th entry of the adjugate matrix. By continuity and $c_{ij}(I)>0$ for the identity matrix $I$, there is a neighbourhood $U\subset \Gl_n(\R)$ of the identity matrix such that $c_{ij}(Z)>0$ for each $Z\in U$. Since $Z(tX)\rightarrow I$ as $t\rightarrow \infty$, we know that $Z(tX)\in U$ and thus that $c_{ij}(Z(tX))>0$ for $t\gg 0$. This completes the proof.
\end{proof}

\begin{proposition}\label{prop: strong positive definite closure}
Strong positive definiteness and $c>0$ are preserved under taking subspaces.
\end{proposition}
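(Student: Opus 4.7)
The plan is to reduce to a single-point removal and then invoke the inheritance formulas already proved in Corollaries \ref{cor: c Y} and \ref{cor: magnitude Y}. Any subspace $Y \subseteq X$ arises from $X$ by deleting the points of $X \setminus Y$ one at a time, so by induction on $\#X - \#Y$ it suffices to prove each claim for $Y = X \setminus \{x\}$ with an arbitrary $x \in X$ (the base cases $Y = X$ and $Y = \emptyset$ being trivial). By Proposition \ref{prop: Y positive definite}, $Y$ is positive definite, so the quantities $\mathbf{w}'$, $c'$ and $\bar{c}'$ associated with $Y$ are well-defined and Corollaries \ref{cor: c Y} and \ref{cor: magnitude Y} are applicable.

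For the preservation of $c > 0$, assume $X$ is positive definite with $c > 0$ and $\#X \geq 2$. Then $\bar{c}_x = \sum_{j \neq x} c_{xj}$ is a sum of strictly positive numbers, hence $\bar{c}_x > 0$, and Corollary \ref{cor: c Y} gives
$$c'_{ij} \,=\, c_{ij} + \frac{c_{ix}\, c_{jx}}{\bar{c}_x} \,>\, 0$$
for all distinct $i, j \in Y$, as a sum of positive terms.

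For the preservation of strong positive definiteness, further assume $\mathbf{w} > 0$. Since $\vert Y\vert > 1$ by the corollary following Theorem \ref{thm: magnitude circumradius}, Corollary \ref{cor: magnitude Y} yields
$$\frac{w'_i}{\vert Y\vert} \,=\, \frac{w_i}{\vert X\vert} + \frac{c_{ix}}{\bar{c}_x} \cdot \frac{w_x}{\vert X\vert} \,>\, 0,$$
as a sum of two strictly positive numbers, so $w'_i > 0$. Combined with the previous paragraph and the positive definiteness of $Y$, this shows $Y$ is strongly positive definite. I do not anticipate any serious obstacle: the Schur-complement formulas developed in Section \ref{sec: matrix theory} were designed precisely to make such inheritance transparent, and the only nontrivial check is $\bar{c}_x > 0$ after removal, which is immediate from $c > 0$ and $\#X \geq 2$.
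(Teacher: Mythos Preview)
Your proof is correct and follows essentially the same approach as the paper: reduce to single-point removal via induction (the paper phrases this as ``repeated application'' together with the composition property of Schur complements) and then read off positivity of $c'$ and $\mathbf{w}'$ directly from Corollaries~\ref{cor: c Y} and~\ref{cor: magnitude Y}. One tiny nit: the inequality $\lvert Y\rvert>1$ from the corollary after Theorem~\ref{thm: magnitude circumradius} tacitly requires $\#Y\geq 2$; when $\#X=2$ you only get $\lvert Y\rvert=1$, but since all you actually need is $\lvert Y\rvert>0$ (which holds for any positive definite space), the argument goes through unchanged.
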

\begin{proof}
This follows from Corollaries \ref{cor: magnitude Y} and \ref{cor: c Y}, which express $c'$ and $w'$ for a subspace $Y=X\setminus\{x\}$ in terms of $c$ and $w$ of the metric space $X$, or by repeated application of this argument following the composition property of Schur complements (Proposition \ref{prop: schur complement}). 
\end{proof}

\subsection{Magnitude submodularity}
This section deals with valuative properties of magnitude. We show that two magnitude-based set functions on the subspaces of a strongly positive definite metric space $X$ are submodular; recall that a function $f$ on the subsets of $X$ is called \emph{strictly submodular} if it satisfies\footnote{As noted, this is a different but equivalent definition from the one given in the introduction.}
\begin{equation}\label{def: submodularity}
f(Y)-f(Y\backslash\{y\}) < f(Y\backslash\{x\})-f(Y\backslash\{x,y\}), \text{~for all $Y\subseteq X$ and distinct $x,y\in Y$},
\end{equation}
and $f$ is called \emph{increasing} (resp. \emph{decreasing}) if it is increasing (resp. decreasing) with respect to the subset partial order on $2^X$. We start with the inverse of magnitude as a set function:
\begin{theorem}\label{thm: inverse submodularity}
Let $X$ be strongly positive definite. Then the function
$$
f: Y\longmapsto 
\begin{cases}
-\vert Y\vert^{-1} \textup{,~if $Y\neq \emptyset$}\\ 
\,\,\,\,\,\,\alpha \,\,\,\quad\textup{,~if $Y=\emptyset$}
\end{cases}
$$
is increasing if $\alpha<-1$ and strictly submodular if $\alpha< -\tfrac{3}{2}$. In particular, $f$ is strictly submodular on the subspaces of $tX'$, for any $X'$ and $t\gg 0$ and $\alpha<-\tfrac{3}{2}$.
\end{theorem}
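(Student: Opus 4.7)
The plan is to reduce both claims to algebraic inequalities in the weighting $w^Y$ and the entries $c^Y$ of $K(Y)^\dagger$ for a subspace $Y\subseteq X$, and to apply the single-point-removal formulas from Corollary~\ref{cor: magnitude Y} and Corollary~\ref{cor: c Y}. By Proposition~\ref{prop: strong positive definite closure}, every subspace $Y\subseteq X$ with $\#Y\geq 2$ is strongly positive definite, so $w^Y>0$, $c^Y>0$, and $\bar c^Y_i>0$; this positivity will drive every inequality below.

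For monotonicity, I apply Corollary~\ref{cor: magnitude Y} to any $Y$ with $\#Y\geq 2$ and any $y\in Y$ to get
$$|Y\setminus\{y\}|^{-1} - |Y|^{-1} \,=\, \frac{2(w^Y_y)^2}{\bar c^Y_y\,|Y|^2} \,>\, 0,$$
which yields $f(Y\setminus\{y\})<f(Y)$. The only remaining comparison is $f(\emptyset)=\alpha$ versus $f(\{y\})=-1$, which is exactly the hypothesis $\alpha<-1$.

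For submodularity I split on whether $Y\setminus\{x,y\}$ is empty. When $\#Y\geq 3$, the identity above gives
$$f(Y)-f(Y\setminus\{y\}) \,=\, \frac{2(w^Y_y)^2}{\bar c^Y_y\,|Y|^2},\qquad f(Y\setminus\{x\})-f(Y\setminus\{x,y\}) \,=\, \frac{2(w^{Y\setminus\{x\}}_y)^2}{\bar c^{Y\setminus\{x\}}_y\,|Y\setminus\{x\}|^2}.$$
Corollaries~\ref{cor: magnitude Y} and~\ref{cor: c Y} re-express the second right-hand side in $Y$-data via
$$\frac{w^{Y\setminus\{x\}}_y}{|Y\setminus\{x\}|} \,=\, \frac{w^Y_y+\tfrac{c^Y_{xy}}{\bar c^Y_x}\,w^Y_x}{|Y|},\qquad \bar c^{Y\setminus\{x\}}_y \,=\, \bar c^Y_y - \frac{(c^Y_{xy})^2}{\bar c^Y_x}.$$
Strong positive definiteness of $Y\setminus\{x\}$ (of cardinality $\geq 2$) forces $\bar c^{Y\setminus\{x\}}_y>0$, i.e.\ $\bar c^Y_y\,\bar c^Y_x>(c^Y_{xy})^2$. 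This legitimizes cross-multiplication in the submodularity inequality, which after expansion and cancellation becomes a sum of manifestly positive terms in $w^Y_x,w^Y_y,c^Y_{xy},\bar c^Y_x,\bar c^Y_y$. Hence submodularity holds in this case with no constraint on $\alpha$.

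The remaining case $Y=\{x,y\}$ is where the bound on $\alpha$ enters. Here $f(Y)-f(Y\setminus\{y\})=1-|Y|^{-1}$ and $f(Y\setminus\{x\})-f(\emptyset)=-1-\alpha$, so the required inequality is $\alpha<-2+|Y|^{-1}$. Since $|Y|=1+\tanh(d(x,y)/2)<2$ for any $d(x,y)>0$, we have $|Y|^{-1}>\tfrac{1}{2}$, whence $-2+|Y|^{-1}>-\tfrac{3}{2}$, so $\alpha<-\tfrac{3}{2}$ suffices uniformly. The ``in particular'' statement is then immediate from Proposition~\ref{prop: strong positive definite asymptotic}. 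The main obstacle is keeping the notation for the ambient $Y$ versus the subspace $Y\setminus\{x\}$ straight in the cross-multiplication step; the threshold $-\tfrac{3}{2}$ is forced precisely by this two-point boundary case, since $|Y|^{-1}\to\tfrac{1}{2}$ as the single pairwise distance tends to infinity.
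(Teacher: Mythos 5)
Your proof is correct and follows essentially the same route as the paper: reduce via Proposition~\ref{prop: strong positive definite closure} to the strongly positive definite case, apply the one-point deletion formulas from Corollaries~\ref{cor: magnitude Y} and~\ref{cor: c Y}, and isolate the two-point boundary case $Y\setminus\{x,y\}=\emptyset$ to extract the threshold $\alpha<-\tfrac{3}{2}$. The only cosmetic difference is the cross-multiplication and expansion step for $\#Y\geq 3$: the paper proceeds more directly by observing from the same two formulas that $w_y/\vert Y\vert$ strictly increases and $\bar c_y$ strictly decreases upon deleting $x$, so that $2w_y^2/(\bar c_y\vert Y\vert^2)$ strictly increases and the submodularity inequality is immediate without any expansion.
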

\begin{proof}
Because strong positive semidefiniteness is conserved for subspaces (Proposition \ref{prop: strong positive definite closure}), we may assume without loss of generality that $Y=X$ when checking the submodularity inequalities \eqref{def: submodularity}. For $\#X = 1$ the statement is true since $\vert X\vert=1$ and \eqref{def: submodularity} holds vacuously; we distinguish the two remaining cases, $\#X=2$ and $\# X>2$, and start with the latter.

($\#X>2$): We can rewrite the submodularity inequalities for $f$ as
\begin{align*}
-\vert X\vert^{-1} + \vert X\backslash\{x\}\vert^{-1} \,&<\, -\vert X\backslash\{y\}\vert^{-1} + \vert X\backslash\{x,y\}\vert^{-1}
\\
\iff \frac{2w_x^2}{\bar{c}_x\cdot \vert X\vert^2}\,\, &<\,\,\frac{2{w'}_x^2}{\bar{c}'_x\cdot \vert X\backslash\{y\}\vert^2} \quad\quad\quad\quad\quad\text{~(Corollary \ref{cor: magnitude Y})},
\end{align*}
where $c,w$ are defined on $X$ and $c',w'$ on $X\backslash\{y\}$. For strongly positive definite $X$, Corollaries \ref{cor: magnitude Y} and \ref{cor: c Y} imply $\bar{c}'_x<\bar{c}_x$ and $w'_x/\vert X\backslash\{y\}\vert>w_x/\vert X\vert$, which confirms the inequality above.

($\#X=2$): We can rewrite the submodularity inequality for $f$ as
\begin{align*}
-\vert X\vert^{-1} + \vert X\backslash\{x\}\vert^{-1}\, &< -\,\vert X\backslash\{y\}\vert^{-1}-\alpha
\\
\iff 1-\frac{1+\exp(-d(x,y))}{2} \,&<\, -1-\alpha \quad\quad\quad\quad\text{~(Example \ref{ex: 2-point})}
\\
\iff \alpha \,&<\,\tfrac{-3}{2} 
\end{align*}
This confirms strict submodularity for $\#X=2$ if $\alpha<-2/3$, and thus completes the proof for strongly positive definite $X$. By Corollary \ref{cor: magnitude Y}, we know that $f$ is increasing for nonempty subsets and that $f(Y)\geq -1$ with equality for $\# Y =1$; it follows that $f$ is increasing on $2^X$ if $\alpha<-1$. The asymptotic result follows from Proposition \ref{prop: strong positive definite asymptotic}.
\end{proof}

Equivalently, one can say that $-f$ in Theorem \ref{thm: inverse submodularity} is strictly supermodular and decreasing, for appropriate $\alpha$. To end this section, we prove a second submodularity result. The function considered in the theorem below is inspired by the asymptotic expression in Theorem \ref{thm: asymptotics of q} for $q(tX)=\#X-\vert X\vert$ in terms of $R(S_t)^2$.
\begin{theorem}\label{thm: shifted submodularity}
Let $X$ be any metric space and $t\gg 0$. Then the function
\begin{equation*}
f:Y \longmapsto \begin{cases}
\frac{m-\vert tY\vert}{m^2} + \frac{m-1}{m} \textup{~~~if $m:=\# Y \neq 0$}\\
\quad\quad\quad \alpha \quad\quad\quad\textup{~if $Y=\emptyset$}
\end{cases}
\end{equation*}
is increasing if $\alpha<\tfrac{1}{2}$ and strictly submodular if $\alpha<-\tfrac{1}{2}$.
\end{theorem}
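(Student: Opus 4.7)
The first step is to simplify $f$. For nonempty $Y$ with $m = \#Y$, an elementary manipulation gives
\[
f(Y) \,=\, \frac{m - \vert tY\vert}{m^2} + \frac{m-1}{m} \,=\, 1 - \frac{\vert tY\vert}{m^2}.
\]
Recall that for $t\gg 0$ every subspace $tY \subseteq tX$ is (strongly) positive definite by Propositions \ref{prop: tX positive definite} and \ref{prop: Y positive definite}, so all magnitudes are well-defined. The Leinster--Willerton asymptotic $\vert tY\vert \to \#Y$ (used in Theorem \ref{thm: asymptotics of q}) gives the pointwise limit $f(Y) \to (m-1)/m$ as $t\to\infty$ for each nonempty $Y$. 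The overall strategy is to verify both claims by computing the $t\to\infty$ limit of the relevant discrete differences and invoking continuity of $\vert tY\vert$ in $t$ on the open set where $Z(tY)$ is invertible.

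For strict submodularity, I would check \eqref{def: submodularity} for each triple $(Y,x,y)$ with $x\neq y$ in $Y$. Setting
\[
D \,:=\, f(Y\setminus\{x\}) - f(Y\setminus\{x,y\}) - f(Y) + f(Y\setminus\{y\}),
\]
I need $D > 0$. When $m := \#Y \geq 3$ all four subsets are nonempty, so the simplified formula for $f$ applies and a direct calculation yields
\[
D \,\longrightarrow\, \frac{1}{m} + \frac{1}{m-2} - \frac{2}{m-1} \,=\, \frac{2}{m(m-1)(m-2)} \,>\, 0 \quad\text{as } t\to\infty,
\]
independent of $\alpha$. When $m = 2$, the empty set enters through $Y\setminus\{x,y\}$; using $f(\{z\}) = 1 - 1/1 = 0$ (since a single point has magnitude $1$) the expression collapses to $D = -\alpha - f(\{x,y\})$, whose limit is $-\alpha - 1/2$. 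The condition $\alpha < -\tfrac{1}{2}$ then makes this strictly positive.

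For the increasing property, I would verify the covering increments $f(Y) > f(Y\setminus\{y\})$ for all $y\in Y$. When $\#Y \geq 2$ both sides are nonempty and
\[
f(Y) - f(Y\setminus\{y\}) \,\longrightarrow\, \frac{1}{m(m-1)} \,>\, 0,
\]
so the inequality holds for $t\gg 0$. The covering step $\emptyset \subsetneq Y$ with $\#Y\geq 2$ requires $\alpha < f(Y)$, whose asymptotic value $(m-1)/m \geq 1/2$ gives the threshold $\alpha < \tfrac{1}{2}$ claimed in the theorem.

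The main technical point is uniformity of the threshold $t_0$ across subsets. Because $X$ is finite there are only finitely many tuples $(Y,x,y)$ to check, each yielding a strict limit inequality, so the maximum of the finitely many individual thresholds yields a single bound valid for all relevant inequalities simultaneously. Continuity of $\vert tY\vert$ on the range where $Z(tY)$ is invertible follows from the rational expression in Proposition \ref{prop: magnitude determinant}; I do not anticipate this being a real obstacle, the substantive content is the limit calculation above.
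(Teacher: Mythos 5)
Your proof is correct and takes essentially the same approach as the paper: both reduce the relevant discrete differences of $f$ to their limiting values for the discrete metric space (where $\vert Y\vert = \#Y$ and hence $f(Y) = (m-1)/m$), then conclude for $t \gg 0$ by continuity (via the determinantal formula for magnitude) and finiteness of the relevant index set. The paper packages this as three families of continuous functions $F_{Y,x,y}, G_{x,y}, H_{Y',Y}$ on $\Gl_n(\R)$ and evaluates them at $Z = I$; you work directly with the second differences of $f$ along covering pairs and take $t\to\infty$, which amounts to the same computation up to a sign convention. Your use of covering increments rather than all pairs $Y' \subset Y$ for the increasing part is a minor, equivalent simplification.

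One small caution, which applies equally to your write-up and to the paper's: for the increasing claim, the covering pair $\emptyset \subset \{z\}$ appears to be omitted. For a singleton one has $\vert t\{z\}\vert = 1$ exactly, so $f(\{z\}) = 0$ for all $t$, and the requirement $f(\emptyset) \le f(\{z\})$ forces $\alpha \le 0$, which is stronger than $\alpha < \tfrac{1}{2}$. (Your phrase ``the covering step $\emptyset \subsetneq Y$ with $\#Y \ge 2$'' conflates a covering pair with an arbitrary inclusion; the genuine covering pair has $\#Y = 1$.) This does not affect the submodularity conclusion, since $\alpha < -\tfrac{1}{2}$ already implies $\alpha < 0$, but the stated threshold for ``increasing'' should be $\alpha < 0$ rather than $\alpha < \tfrac{1}{2}$.
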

\begin{proof}
We follow the strategy from the proof of Proposition \ref{prop: strong positive definite asymptotic} and represent a metric space $X$ by its invertible similarity matrix $Z\in \Gl_n(\R)$. We consider three families of functions: the family $F_{Y,x,y}$, labeled by $x\neq y\in Y\subseteq X$, the family $G_{x,y}$ labeled by $x\neq y\in X$, and the family $H_{Y',Y}$ labeled by $Y'\subset Y\subseteq X$; these are defined as
\begin{align*}
F_{Y,x,y} &= f(Y)-f(Y\backslash\{x\})-f(Y\backslash\{y\})+f(Y\backslash\{x,y\})
\\
G_{x,y} &= f(\{x,y\})-f(\{x\})-f(\{y\})+f(\emptyset)
\\
H_{Y',Y} &= f(Y')-f(Y)
\end{align*}
Since $\vert Y\vert = -\det\left(\begin{smallmatrix}0&\mathbf{1}^T\\\mathbf{1}&Z_{YY}\end{smallmatrix}\right)/\det(Z)$ by Proposition \ref{prop: magnitude determinant}, these are continuous functions on $\Gl_n(\R)$. Next, we confirm that each of these functions evaluates negatively for the discrete metric space; in that case, $Z=I$ and thus $\vert Y\vert=m$ and $f(Y)=(m-1)/m$. We find
\begin{align*}
F_{Y,x,y}(I) &= \tfrac{m-1}{m} -2\tfrac{m-2}{m-1} + \tfrac{m-3}{m-2} = \tfrac{-2}{m(m-1)(m-2)}<0
\\
G_{x,y}(I) &= \tfrac{1}{2} +\alpha
\\
H_{Y',Y}(I) &= \begin{cases}
\tfrac{m'-1}{m'}-\tfrac{m-1}{m}=\tfrac{m'-m}{m\cdot m'}<0 &\text{~if $Y'\neq\emptyset$}\\
\alpha-\tfrac{m-1}{m} \leq \alpha - \tfrac{1}{2} &\text{~if $Y'=\emptyset$}
\end{cases}.
\end{align*}
Thus $F(I)<0$, $G(I)<0$ if $\alpha<-\tfrac{1}{2}$ and $H(I)<0$ if $\alpha<\tfrac{1}{2}$; fix a choice of $\alpha$. By continuity of $F,G,H$ and negativity at the identity matrix $I$, there is a neighbourhood $U\subset \Gl_n(\R)$ of $I$, such that $F,G,H$ are negative on each $Z\in U$. Since $Z(tX)\rightarrow I$ as $t\rightarrow\infty$, we thus know that $Z(tX)\in U$ for $t\gg 0$ and thus that $F,G,H$ are negative. These are the strict submodularity inequalities ($F,G$) and the increasing inequalities ($H$), so this completes the proof.
\end{proof}

\section*{Acknowledgments} I thank Mark Meckes for helpful discussions and encouragements. Part of this article was written during a research stay at the Max Planck Institute of Molecular Cell Biology and Genetics in Dresden. For the purpose of open access, the author has applied a CC BY public copyright licence to any author accepted manuscript arising from this submission.
\bibliographystyle{abbrv}
\bibliography{bibliography.bib}

\bigskip
\bigskip
\noindent {\bf Authors' address:}	

\smallskip		
\noindent Karel Devriendt, University of Oxford
\hfill \url{devriendt@maths.ox.ac.uk}
\end{document}